\documentclass{amsart}
\usepackage[top=1in, bottom=1in, left=1.25in, right=1.25in]{geometry}
\usepackage{mathrsfs}
\usepackage{amssymb}
\usepackage{amsfonts}
\usepackage[linkcolor=red,citecolor=blue]{hyperref}
\usepackage{MnSymbol}
\usepackage{latexsym}
\usepackage[all,cmtip]{xy}
\usepackage{amsmath,amsthm}
\usepackage{color,ifsym}
\usepackage{graphicx}
\usepackage{fancybox}
\usepackage{longtable}
\usepackage{lscape}
\usepackage{float}
\usepackage{mathdots}
\usepackage{enumerate}
\usepackage{multirow}

\usepackage{extarrows}

\newtheorem{thm}{Theorem}[section]
\newtheorem{lem}[thm]{Lemma}        
\newtheorem{cor}[thm]{Corollary}
\newtheorem{prop}[thm]{Proposition}

\newtheorem*{abspr*}{Abstract Proposition}
\newtheorem*{abslemma*}{Abstract Lemma}
\newtheorem*{mthm*}{Main Theorem}
\newtheorem*{thm*}{Theorem}
\newtheorem*{defi*}{Definition}
\newtheorem*{lem*}{Lemma}        
\newtheorem*{cor*}{Corollary}
\newtheorem*{prop*}{Proposition}
\newtheorem*{conj*}{Conjecture}
\newtheorem{rem}{{\it Remark}} 


\theoremstyle{plain} 
\newcommand{\thistheoremname}{}
\newtheorem*{genericthm*}{\thistheoremname}
\newenvironment{namedthm*}[1]
{\renewcommand{\thistheoremname}{#1}%
	\begin{genericthm*}}
	{\end{genericthm*}}


\usepackage[leqno]{amsmath}
\makeatletter
\newcommand{\leqnomode}{\tagsleft@true\let\veqno\@@leqno}
\newcommand{\reqnomode}{\tagsleft@false\let\veqno\@@eqno}
\makeatother

\begin{document}

    \title{Rodier type theorem for generalized principal series}
	\author{CAIHUA LUO}
	\address{Department of Mathematical Sciences, Chalmers University of Technology and the University of Gothenburg, Chalmers Tv\"{a}rgata 3, SE-412 96 G\"{o}teborg}
	\email{caihua@chalmers.se}
	\date{}
	\subjclass[2010]{22E35}
	\keywords{Generalized principal series, Regular supercuspidal representation, Generic, Discrete series, Tempered representation}	
	\maketitle
	
%
	
	\begin{abstract}
		Given a regular supercuspidal representation $\rho$ of the Levi subgroup $M$ of a standard parabolic subgroup $P=MN$ in a connected reductive group $G$ defined over a non-archimedean local field $F$, we serve you a Rodier type structure theorem which provides us a geometrical parametrization of the set $JH(Ind^G_P(\rho))$ of Jordan--H{\"o}lder constituents of the Harish-Chandra parabolic induction representation $Ind^G_P(\rho)$, vastly generalizing Rodier structure theorem for $P=B=TU$ Borel subgroup of a connected split reductive group about 40 years ago. Our novel contribution is to overcome the essential difficulty that the relative Weyl group $W_M=N_G(M)/M$ is not a coxeter group in general, as opposed to the well-known fact that the Weyl group $W_T=N_G(T)/T$ is a coxeter group. Indeed, such a beautiful structure theorem also holds for finite central covering groups.
	\end{abstract}

\section{introduction}
Following Harish-Chandra's ``philosophy of cusp forms'' which  culminates in the Langlands classification theorems under parabolic induction both locally and globally, a longstanding local problem is to understand the decomposition structures of parabolic inductions, especially those inducing from supercuspidal representations which are the so-called generalized principal series. Among those parabolic inductions, there are two extreme cases, namely unitary parabolic inductions and regular generalized principal series of which many mathematicians have devoted their efforts to describe the corresponding internal structures. To be more precise, 
\begin{itemize}
	\item \emph{Tempered parabolic induction}: Knapp--Stein R-group theory and its explicit structures (Bruhat, Harish-Chandra, Knapp--Stein, Jacquet, Casselman, Howe, Silberger, Winarsky, Keys, Shahidi, Goldberg etc).
	\item \emph{Principal series}: a. Muller's irreducibility criterion for principal series \cite{muller1979integrales}; b. Rodier's structure theorem for regular principal series \cite{rodier1981decomposition}.
\end{itemize}
So one might ask the following natural questions:
\begin{align*}
Q1:~&\mbox{What is the irreducibility criterion for generalized principal series in terms of Muller? }\\
Q2:~&\mbox{What is the story of Rodier structure theorem for regular generalized principal series?}
\end{align*}
The first question in principal should be doable after Muller's work, but we have not seen any literature and will write down the details separately (cf. \cite{luo2018muller}). As for the second question, without the far-reaching Langlands--Shahidi theory built up by Shahidi in the early 1990s (cf. \cite{shahidi1990proof}), it seems that one cannot push Rodier's theorem further to regular generalized principal series if following Rodier's paper completely, especially the argument of Proposition 3. On the other hand, for general parabolic subgroup $P=MN\supset B=TU$ in $G$, it is well-known that the relative Weyl group $W_M=N_G(M)/M$ is not a coxeter group, as opposed to the Weyl group $W_T=N_G(T)/T$. To overcome those difficulties, we do some general observations which enable us to deal with generalized principal series in the way of dealing with principal series of split group. To be precise, let $X(M)_F$ be the group of $F$-rational characters of $M$, we set $\mathfrak{a}^\star_M:=X(M)_F\otimes_\mathbb{Z}\mathbb{R}$.
We denote by $\Phi_M$ the set of reduced relative roots of $M$ in $G$, by $\Delta_M$ the set of relative simple roots determined by $N$. Let $\Phi^0_M$ be the set of those relative roots which contribute reflections in $W_M$. Denote by $W_M^1:=\big\{w\in W_M:~w.(\Phi_M^0)^+>0 \big\}$ and by $W_M^0$ the ``small'' relative Weyl group, i.e. $W_M^0:=\left<w_\alpha:~\alpha\in \Phi^0_M \right>.$ Define $Ind_P^G(\rho)$ to be the generalized principal series inducing from the parabolic subgroup $P=MN$ to $G$ with $\rho$ a supercuspidal representation of $M$. If $\rho 
$ is regular, i.e. $W_\rho:=\big\{w\in W_M:~w.\rho=\rho \big\}=\big\{1\big\}$, we say that $Ind^G_P(\rho)$ is a regular generalized principal series. Concerning the fact that $W_M$ is not a coxeter group in general, we (re)discover two key observations as follows:
\begin{enumerate}[(i)]
	\item (see Lemma \ref{key2}) \[W_M=W_M^0\rtimes W_M^1. \]
	\item (see Lemma \ref{key1})) For $w\in W_M^0$ and $w_1\in W_M^1$, we have, 
	\[Ind^G_P(\rho^w)\simeq Ind^G_P(\rho^{ww_1}). \]
\end{enumerate}
Very recently, we learned that the first observation is an old result of Lusztig and has since been applied creatively in \cite[Lemma 5.2]{lusztig1976coxeter}, \cite[Corollary 2.3]{howlett1980induced}, \cite{morris1993tamely} and \cite{brink1999normalizers}. Another observation concerns the uniformity of the location of the unique pole of co-rank one Plancherel measures, please see 
Theorem \ref{linearindep} for details (cf. \cite{silberger1980special} for the uniqueness claim). For regular supercuspidal representation $\rho$ of $M$, we denote by $S$ the set of those positive relative coroots $\alpha^\vee$ with $\alpha\in \Phi_M^0$ such that the co-rank one Plancherel measure $\mu_\alpha(\cdot)$ has a pole at $\rho$. Denote by $^{0}\mathfrak{a}_M^*$ the relative ``small'' root space
\[^{0}\mathfrak{a}^\star_M:=Span_\mathbb{R}\{\alpha:~\alpha\in \Phi_M^0 \},\]
and by $C^+_M$ the relative positive dominant Weyl chamber in $^{0}\mathfrak{a}_M^*$ determined by $P$.
 
In view of those structures, Rodier's argument in \cite{rodier1981decomposition} applies seamlessly to generalized principal series which says that
\begin{thm}(Rodier Type Structure Theorem i.e. Theorem \ref{mthm})
	The constituents $\pi_\Gamma$ of the regular generalized principal series $Ind^G_P(\rho)$ are parameterized by the connected components $\Gamma$ of $$^{0}\mathfrak{a}_M^\star-\bigcup_{\alpha^\vee\in S}Ker(\alpha^\vee)$$
	satisfying the following property:
	
	the Jacquet module $r_P(\pi_\Gamma)$ of $\pi_\Gamma$ with respect to $P$ is equal to 
	\[\bigoplus_{wC^+_M\subset \Gamma}\rho^w. \]	
\end{thm}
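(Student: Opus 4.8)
The plan is to run Rodier's argument for split principal series \cite{rodier1981decomposition} almost verbatim, using Lemmas~\ref{key1}--\ref{key2} to compensate for the failure of $W_M$ to be a Coxeter group and Theorem~\ref{linearindep} to replace the explicit rank-one Plancherel computations that are available over a split torus. First one passes to the Coxeter group $W_M^0$: by Lemma~\ref{key2} we have $W_M=W_M^0\rtimes W_M^1$, and every $w_1\in W_M^1$, preserving $(\Phi_M^0)^+$, fixes both the fundamental chamber $C_M^+$ and the whole hyperplane arrangement $\{Ker(\alpha^\vee):\alpha\in\Phi_M^0\}$ inside ${}^{0}\mathfrak{a}_M^\star$. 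Hence the cones $wC_M^+$ ($w\in W_M$) are exactly the Weyl chambers of $W_M^0$, with $wC_M^+=w_0C_M^+$ for $w_0$ the $W_M^0$-part of $w$, and $W_\Gamma:=\{w\in W_M:wC_M^+\subseteq\Gamma\}$ defines a partition $W_M=\bigsqcup_\Gamma W_\Gamma$ indexed by the connected components $\Gamma$. This is what legitimizes Rodier's minimal-gallery and reduced-word manipulations, which a priori require the exchange property.

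Next, the Bernstein--Zelevinsky geometric lemma together with the supercuspidality of $\rho$ (all proper Jacquet modules of $\rho$ vanish) gives $r_P\,Ind_P^G(\rho)=\bigoplus_{w\in W_M}\rho^w$, and regularity $W_\rho=\{1\}$ makes these summands pairwise inequivalent. So $Ind_P^G(\rho)$ is multiplicity-free, every Jordan--H\"older constituent $\pi$ has $r_P(\pi)$ semisimple (a subquotient of $\bigoplus_w\rho^w$), and $A_\pi:=\{w\in W_M:\rho^w\hookrightarrow r_P(\pi)\}$ defines a partition of $W_M$ into nonempty parts ($A_\pi\ne\varnothing$ since $\rho$ is cuspidal). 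The theorem now reduces to the identity $\{A_\pi\}_\pi=\{W_\Gamma\}_\Gamma$, and one shows that each $A_\pi$ is a union of sets $W_\Gamma$ via two linking mechanisms. First, since $r_P(\pi)$ is semisimple, Frobenius reciprocity gives $w\in A_\pi\iff\pi\hookrightarrow Ind_P^G(\rho^w)$, whence Lemma~\ref{key1} yields $w\in A_\pi\iff ww_1\in A_\pi$ for every $w_1\in W_M^1$: so $A_\pi$ is a union of right cosets $w_0W_M^1$.

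Second, for a simple relative root $\alpha$ of $\Phi_M^0$ let $M_\alpha\supseteq M$ be the Levi of relative semisimple rank one attached to $\alpha$, with standard parabolic $P_\alpha$, so that $Ind_P^G=Ind_{P_\alpha}^G\circ Ind_{P\cap M_\alpha}^{M_\alpha}$. By the Harish-Chandra--Silberger rank-one $R$-group theory (cf.\ \cite{silberger1980special}) together with Theorem~\ref{linearindep}, $Ind_{P\cap M_\alpha}^{M_\alpha}(\rho^v)$ is irreducible precisely when the wall shared by the adjacent chambers $vC_M^+$ and $vw_\alpha C_M^+$ is not one of the $Ker(\beta^\vee)$ with $\beta^\vee\in S$, and has length two otherwise. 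Inserting the irreducible case into the transitivity formula, applying $r_{P_\alpha}$, and using exactness of the Jacquet functors together with the decomposition of smooth representations by cuspidal support --- which forces the unique $M_\alpha$-level summand of $r_{P_\alpha}\,Ind_P^G(\rho)$ carrying $\rho^v$ to be that irreducible induced representation, hence to also carry $\rho^{vw_\alpha}$ --- one finds that crossing a non-$S$-wall keeps $\rho^v$ and $\rho^{vw_\alpha}$ in the Jacquet module of the same constituent. Combined with the first mechanism, every $A_\pi$ is therefore a union of $W_\Gamma$'s; in particular there are at most as many constituents as components $\Gamma$, and $\Gamma\mapsto\pi_\Gamma$, the unique constituent with $W_\Gamma\subseteq A_{\pi_\Gamma}$, is a well-defined surjection.

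The remaining assertion --- that this surjection is injective, i.e.\ that crossing an $S$-wall genuinely separates constituents, so that $A_{\pi_\Gamma}=W_\Gamma$ on the nose --- is Rodier's Proposition~3, and it is the main obstacle. The plan is to factor the standard intertwining operator $Ind_P^G(\rho)\to Ind_P^G(\rho^w)$ along a minimal gallery from $C_M^+$ to $wC_M^+$ (now available because galleries live in the Coxeter group $W_M^0$) as a composite of relative rank-one intertwining operators, and to show that such a rank-one factor is an isomorphism exactly when the wall it crosses is not an $S$-wall. This is precisely where the Langlands--Shahidi normalization of intertwining operators via local coefficients and $\gamma$-factors \cite{shahidi1990proof}, together with the uniform location of the pole of $\mu_\alpha$ given by Theorem~\ref{linearindep}, become indispensable, since for a supercuspidal $\rho$ on a general Levi $M$ there is nothing playing the role of Rodier's explicit split-torus computations --- this is exactly the point at which following Rodier's proof of Proposition~3 literally does not carry over. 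Realizing $\pi_\Gamma$ as the image of such a composite operator along a gallery that exits $\Gamma$ through an $S$-wall then bounds $r_P(\pi_\Gamma)$ from above by $\bigoplus_{w\in W_\Gamma}\rho^w$, which together with the linking forces equality. Granting this, $\Gamma\mapsto\pi_\Gamma$ is a bijection between the connected components and the Jordan--H\"older constituents, with $r_P(\pi_\Gamma)=\bigoplus_{wC_M^+\subseteq\Gamma}\rho^w$, which is the assertion of Theorem~\ref{mthm}.
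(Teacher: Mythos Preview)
Your overall outline matches the paper's strategy: reduce to the Coxeter group $W_M^0$ via Lemma~\ref{key2}, handle the $W_M^1$-part via Lemma~\ref{key1}, and then run Rodier's gallery argument inside $W_M^0$. The linking mechanism you describe (non-$S$-walls do not separate constituents) is essentially the paper's, phrased via cuspidal support rather than intertwining operators.

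However, your final paragraph contains a substantive misconception. You assert that the factorization of $A(w,w')$ into rank-one pieces, and the fact that each rank-one factor is an isomorphism exactly off the $S$-walls, require Langlands--Shahidi normalization and Theorem~\ref{linearindep}. This is exactly what the paper \emph{avoids}. The factorization claim $(KO)$ is established by a direct Jacquet-module computation: using only the Bernstein--Zelevinsky geometric lemma and the reduced-word formula $(RD)$ for $R(w^{-1}w')$ in the Coxeter group $W_M^0$, one checks that $\rho^{w'}$ lies in the intersection $Jim$ of the images, so the composite is nonzero and hence equals $A(w,w')$ up to scalar. Whether a given rank-one factor is an isomorphism is then just the definition of $S$ (via \cite{silberger1980special}); no $\gamma$-factors or local coefficients enter. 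The ``explicit split-torus computations'' in Rodier that you say have no analogue are replaced not by Langlands--Shahidi theory but by the geometric description $(SB)$ of $r_P(\mathrm{Ker}\,A)$ and $r_P(\mathrm{Im}\,A)$, which follows from the geometric lemma and holds verbatim for supercuspidal $\rho$ on a general Levi.

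Two smaller points follow from this. First, Theorem~\ref{linearindep} is proved in Section~4, \emph{after} Theorem~\ref{mthm}, and is used only for the square-integrability and temperedness statements in Section~5; invoking it in the proof of the structure theorem is unnecessary (and your specific use of it to decide rank-one irreducibility is circular with the definition of $S$). Second, you misidentify Rodier's Proposition~3: in \cite{rodier1981decomposition} that proposition is the linear independence of $S$ (what this paper generalizes as Theorem~\ref{linearindep}), not the injectivity of $\Gamma\mapsto\pi_\Gamma$. The injectivity is obtained exactly from the $(KO)$ factorization together with the explicit description of $Jer=r_P(\mathrm{Ker}\,A(w,w'))$, with no further analytic input.
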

As an essential input for the determination of the square-integrable/tempered constituents of $Ind^G_P(\rho)$, we need the following key claim:
\begin{thm}(cf. Theorem \ref{linearindep})
	Keep the notions as before. The set $S$ is linearly independent.
\end{thm}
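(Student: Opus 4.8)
The plan is to make membership in $S$ completely explicit and then run Rodier's root-theoretic argument, the genuinely new point being a counting lemma for root systems. Write $\rho\simeq\rho^{u}\otimes\lambda$ with $\rho^{u}$ a unitary supercuspidal representation of $M$ and $\lambda\in{}^{0}\mathfrak{a}^{\star}_{M}$ the real part. Harish-Chandra's theory of the $\mu$-function, Silberger's uniqueness of the pole, and the uniformity of its location should together yield
\[
\alpha^{\vee}\in S\quad\Longleftrightarrow\quad w_{\alpha}(\rho^{u})\simeq\rho^{u}\ \text{ and }\ \langle\lambda,\alpha^{\vee}\rangle=\pm c,
\]
where $c>0$ is the universal pole location, which we normalize to $c=1$. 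Hence $S$ is contained in the root subsystem $\Phi_{\rho^{u}}:=\{\gamma\in\Phi^{0}_{M}:w_{\gamma}(\rho^{u})\simeq\rho^{u}\}$ of $\Phi^{0}_{M}$, and the regularity hypothesis $W_{\rho}=\{1\}$ forces $\langle\lambda,\gamma^{\vee}\rangle\neq 0$ for every $\gamma\in\Phi_{\rho^{u}}$: if it vanished, $w_{\gamma}$ would fix both $\rho^{u}$ and $\lambda$, hence fix $\rho$, which is excluded. So $\lambda$ restricts to a regular element of $\Phi_{\rho^{u}}$.

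Suppose now that $S$ is linearly dependent, and choose a minimal linearly dependent subset $T=\{\alpha_{1}^{\vee},\dots,\alpha_{m}^{\vee}\}\subseteq S$; since no two distinct positive coroots of a reduced system are proportional, $m\geq 3$. Put $\Psi:=\{\gamma\in\Phi_{\rho^{u}}:\gamma^{\vee}\in\operatorname{Span}_{\mathbb R}(T)\}$: this is a root system, it is irreducible (a minimal circuit cannot straddle two orthogonal factors), it has rank $m-1$, it contains every $\alpha_{i}$, and $\lambda$ is still regular for it. Replacing each $\alpha_{i}$ by $\epsilon_{i}\alpha_{i}$ with $\epsilon_{i}=\langle\lambda,\alpha_{i}^{\vee}\rangle\in\{\pm1\}$ turns $T$ into a set of $m$ pairwise distinct elements of
\[
D_{\lambda}(\Psi):=\{\gamma^{\vee}:\gamma\in\Psi,\ \langle\lambda,\gamma^{\vee}\rangle=1\},
\]
which are simultaneously positive coroots for the positive system of $\Psi^{\vee}$ determined by $\lambda$. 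Thus everything reduces to the following assertion: \emph{if $\Psi$ is irreducible of rank $r$ and $\lambda$ is $\Psi$-regular, then $\lvert D_{\lambda}(\Psi)\rvert\leq r$}. Granting it, $m\leq m-1$, absurd.

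This counting lemma is the main obstacle. Geometrically it says that a regular $\lambda$ cannot lie on more than $r$ of the affine hyperplanes $\langle\,\cdot\,,\gamma^{\vee}\rangle=1$ at once, the linear walls $\langle\,\cdot\,,\gamma^{\vee}\rangle=0$ being forbidden. In type $A_{r}$, writing $\lambda=(\lambda_{1},\dots,\lambda_{r+1})$, the coroots of $D_{\lambda}$ index the edges $\{i,j\}$ with $\lvert\lambda_{i}-\lambda_{j}\rvert=1$ of a graph on $r+1$ vertices; a cycle in this graph would be a closed walk with steps $\pm1$, which necessarily revisits a value and so violates the distinctness of the $\lambda_{i}$ provided by regularity. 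Hence the graph is a forest, with at most $r$ edges.

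The same ``no closed $\pm1$-walk'' principle, now applied to the $2r$ distinct values $\pm\lambda_{i}$ (adjoining $0$ in the $C$-case) and exploited together with the antipodal symmetry $v\mapsto-v$, disposes of types $B_{r},C_{r},D_{r}$; here regularity additionally furnishes $\lambda_{i}\neq0$ and $\lambda_{i}\neq\pm\lambda_{j}$, which is precisely what kills the dangerous dependent triples, for instance the coroots of $e_{1},e_{2},e_{1}+e_{2}$ in $B_{2}$, whose only common solution $\lambda=(\tfrac12,\tfrac12)$ lies on the wall of $(e_{1}-e_{2})^{\vee}$. The exceptional types $G_{2},F_{4},E_{6},E_{7},E_{8}$ then demand a finite direct verification; alternatively, when $m=3$ the question collapses to the rank-two subsystems $A_{2},B_{2},G_{2}$. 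Finally, I would stress that the uniformity of the pole location is not a cosmetic hypothesis: it is exactly what forces all coroots of $S$ onto the \emph{same} affine hyperplane $\langle\lambda,\,\cdot\,\rangle=c$, which is what makes the forest/counting argument even possible.
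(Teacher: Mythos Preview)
Your argument has a genuine gap at the very first step: the claim that there is a \emph{single} universal pole location $c$ such that $\alpha^{\vee}\in S$ iff $w_{\alpha}\rho^{u}\simeq\rho^{u}$ and $\langle\lambda,\alpha^{\vee}\rangle=\pm c$. Silberger's theorem gives, for each $\alpha$, a unique $s_{0,\alpha}>0$ at which $\mu_{\alpha}(s,\rho_{0})$ has its pole, and the ``uniformity'' you invoke only says that $s_{0,\alpha}=s_{0,w.\alpha}$ for $w\in W_{S}$ (because $w.\rho_{0}=\rho_{0}$). Thus the pole location is constant on $W_{S}$-orbits in $\Phi_{S}$, not on all of $\Phi_{S}$. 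When the irreducible system $\Psi$ you pass to is simply-laced there is one orbit and your reduction to the single level-set $D_{\lambda}(\Psi)$ goes through; but when $\Psi$ is of type $B_{n},C_{n},F_{4}$ or $G_{2}$ there are two orbits (long and short), and the two pole values $s_{0},t_{0}$ are in general distinct. In that case your minimal dependent set $T$ need not land in a single $D_{\lambda}$, and the counting lemma no longer bounds $|T|$ by $\operatorname{rk}\Psi$. Your own $B_{2}$ example illustrates the danger: you rule out $\{e_{1}^{\vee},e_{2}^{\vee},(e_{1}+e_{2})^{\vee}\}$ by showing the common level forces $\lambda$ onto a wall, but nothing you have written excludes, say, $\langle\lambda,e_{i}^{\vee}\rangle=s_{0}$ for the short roots and $\langle\lambda,(e_{1}+e_{2})^{\vee}\rangle=t_{0}$ for the long one with $s_{0}\neq t_{0}$.

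The paper's proof shares your strategy in the simply-laced case (the level set of a regular $\lambda$ at its minimal positive value is a base, hence of size $\leq\operatorname{rk}$), but it then explicitly confronts the two-orbit situation: writing $S^{+}=S^{+}_{s_{0}}\cup S^{+}_{t_{0}}$, it bounds each piece separately by ad hoc arguments specific to $B_{n}/C_{n}$, $G_{2}$, and $F_{4}$, exploiting that subtracting two same-length elements of $S^{+}$ often produces a root on which $\omega_{\rho}$ vanishes, contradicting regularity. To repair your proof you would need either to supply this two-level analysis, or to prove a stronger counting lemma of the form: for $\lambda$ regular in an irreducible $\Psi$ and constants $c_{\ell},c_{s}>0$ attached to long and short roots, the set $\{\gamma^{\vee}:\langle\lambda,\gamma^{\vee}\rangle=c_{|\gamma|}\}$ is linearly independent. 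That is essentially what the paper establishes case by case.
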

Denote by $\omega_\rho$ the real unramified character of $M$, i.e. $\omega_\rho\in \mathfrak{a}_M^*$, such that the central character of $\omega_\rho^{-1}\rho$ is unitary. Let $^{+}\mathfrak{a}^\star_M ~(resp.~ ^{+}\bar{\mathfrak{a}}^\star_M)$ be the set of such $\chi\in \mathfrak{a}_M^\star$ of the form
\[\chi=\sum\limits_{\alpha\in \Delta_M}x_\alpha \alpha,\]
with all the coefficients $x_\alpha>0~(resp.~x_\alpha\geq 0)$. Denote by $\mathfrak{a}_M^{*+}$ $(resp. \bar{\mathfrak{a}}_M^{*+})$ the (resp. closure of) dominant Weyl chamber in $\mathfrak{a}_M^*$ determined by $\Delta_M$. Without loss of generality, we assume that $\omega_\rho\in \bar{\mathfrak{a}}_M^{*+}$ and write $\Gamma_+=\bigcap\limits_{\alpha^\vee\in S}(\alpha^\vee)^{-1}(\mathbb{R}^+)$. As applications of those theorems, we first prove a sufficient and necessary condition for the existence of square-integrable/tempered constituents as follows:
\begin{prop}(cf. Proposition \ref{ds})
	Keep the notions as before. $JH(Ind^G_P(\rho))$ contains at most one square-integrable constituent. Moreover
	the representation $\pi_\Gamma$ is square-integrable if and only if $^{0}\mathfrak{a}_M^*=\mathfrak{a}_M^*=Span_{\mathbb{R}}\{\alpha:~\alpha^\vee\in S \}$ and $\Gamma=\Gamma_+$.
\end{prop}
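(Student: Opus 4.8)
The plan is to feed the explicit Jacquet module formula of Theorem~\ref{mthm} into Casselman's square-integrability criterion, reading the exponents off from $\omega_\rho$ and controlling the resulting hyperplane arrangement with Theorem~\ref{linearindep} and the splitting $W_M=W_M^0\rtimes W_M^1$ of Lemma~\ref{key2}. The first step is a reduction: since every $\pi_\Gamma$ has cuspidal support $(M,\rho)$ with $\rho$ supercuspidal, transitivity of the Jacquet functor makes $r_Q(\pi_\Gamma)$ nonzero only for standard parabolics $Q=LU_Q$ whose Levi contains a conjugate of $M$, and every exponent of $r_Q(\pi_\Gamma)$ is then the restriction to $\mathfrak a_L^\star$ of some $w\omega_\rho$ with $wC_M^+\subseteq\Gamma$. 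Because restricting a strictly positive combination of $\Delta_M$ to a larger Levi is again a strictly positive combination of the relevant simple roots, Casselman's criterion collapses to the case $Q=P$: $\pi_\Gamma$ is square-integrable if and only if $w\omega_\rho\in{}^{+}\mathfrak a_{M}^{\star}$ for every $w$ with $wC_{M}^{+}\subseteq\Gamma$. In particular this already forces $\omega_\rho|_{\mathfrak a_G}=0$, so there is no discrepancy between square-integrability and square-integrability modulo the centre, and the asserted equality $\mathfrak a_{M}^{\star}=\mathrm{Span}\{\alpha:\alpha^\vee\in S\}$ forces $\mathfrak a_G^\star=0$.

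Next I would isolate the candidate, which also yields uniqueness. Since $C_{M}^{+}\subseteq\Gamma_+$, Theorem~\ref{mthm} gives that $\rho$ (the $w=1$ term) lies in $r_P(\pi_{\Gamma_+})$ and in no other $r_P(\pi_\Gamma)$; as $r_P(\pi_{\Gamma_+})$ is a sum of supercuspidals, $\rho$ is a quotient of it, so Frobenius reciprocity together with the hypothesis $\omega_\rho\in\bar{\mathfrak a}_{M}^{\star+}$ exhibits $\pi_{\Gamma_+}$ as the unique irreducible submodule of $Ind_P^G(\rho)$. A standard fact about standard modules — for an inducing exponent in the closed positive Weyl chamber the square-integrable constituent, if any, lies in the socle — then forces any square-integrable constituent to be $\pi_{\Gamma_+}$, i.e. to have $\Gamma=\Gamma_+$. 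This proves the ``at most one'' assertion and reduces the ``moreover'' to deciding when $\pi_{\Gamma_+}$ itself is square-integrable. For sufficiency, assume ${}^{0}\mathfrak a_{M}^{\star}=\mathfrak a_{M}^{\star}=\mathrm{Span}\{\alpha:\alpha^\vee\in S\}$. By Lemma~\ref{key2} then $W_M^1=\{1\}$, so $W_M=W_M^0$ acts on $\mathfrak a_{M}^{\star}$ as a genuine Coxeter group; by Theorem~\ref{linearindep} the set $S$ is a basis, and since the corank-one reducibility — the poles of the $\mu_\alpha$, which is the input feeding Theorem~\ref{linearindep} — places each $\mathrm{Ker}(\alpha^\vee)$, $\alpha^\vee\in S$, along a wall of $C_{M}^{+}$, one obtains $\Gamma_+=C_{M}^{+}$. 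Hence $r_P(\pi_{\Gamma_+})=\rho$, whose only exponent $\omega_\rho$ lies in $\bar{\mathfrak a}_{M}^{\star+}\setminus\{0\}\subseteq{}^{+}\mathfrak a_{M}^{\star}$ (the inverse Cartan matrix has positive entries), so $\pi_{\Gamma_+}$ is square-integrable by the reduction.

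For the converse, let $\pi_\Gamma$ be square-integrable; by the preceding paragraph $\Gamma=\Gamma_+$ and $\omega_\rho\in{}^{+}\mathfrak a_{M}^{\star}\setminus\{0\}$, and I would deduce $\Gamma_+=C_{M}^{+}$ by contradiction. If ${}^{0}\mathfrak a_{M}^{\star}\subsetneq\mathfrak a_{M}^{\star}$, then $\mathrm{Stab}_{W_M}(C_{M}^{+})=W_M^1\neq\{1\}$, so the single chamber $C_{M}^{+}\subseteq\Gamma_+$ already contributes $\bigoplus_{w_1\in W_M^1}\rho^{w_1}$ to $r_P(\pi_{\Gamma_+})$, and since ${}^{+}\mathfrak a_{M}^{\star}$ is a pointed cone while $W_M^1$ permutes $(\Phi_M^0)^{+}$ nontrivially, some exponent $w_1\omega_\rho$ escapes ${}^{+}\mathfrak a_{M}^{\star}$. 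If instead ${}^{0}\mathfrak a_{M}^{\star}=\mathfrak a_{M}^{\star}$ but $S$ omits a wall $\mathrm{Ker}(\beta^\vee)$ of $C_{M}^{+}$, then $\Gamma_+$ contains a further chamber $wC_{M}^{+}$ with $w\neq 1$, and walking out to it along reflections in the omitted walls, while using that the pairings $\langle\omega_\rho,\alpha^\vee\rangle$ with $\alpha^\vee\notin S$ are not corank-one reducibility values and $\omega_\rho$ is dominant, produces an exponent $w\omega_\rho$ with a strictly negative $\Delta_M$-coordinate. Either way this contradicts the reduction, so $\Gamma_+=C_{M}^{+}$, which unwinds to the asserted equalities, finishing the proof.

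The hard part, I expect, is this converse direction, and within it two points. First, the combinatorics that square-integrability forces $\Gamma_+$ down to a single Weyl chamber: this is exactly where the failure of $W_M$ to be a Coxeter group, and hence the decomposition $W_M=W_M^0\rtimes W_M^1$ of Lemma~\ref{key2} together with the induced-representation identity of Lemma~\ref{key1}, must be genuinely used rather than circumvented. Second, and more delicate, is pinning down the position of $\omega_\rho$ relative to the hyperplanes $\mathrm{Ker}(\alpha^\vee)$ — which values the pairings $\langle\omega_\rho,\alpha^\vee\rangle$ can take, and the fact that $\alpha^\vee\in S$ aligns $\alpha^\vee$ with the positive chamber — which is the corank-one input of Harish-Chandra's $\mu$-function (Silberger, and Langlands--Shahidi for the exact pole position) that also underlies Theorem~\ref{linearindep}.
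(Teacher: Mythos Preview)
Your strategy is close in spirit to the paper but diverges in execution and has a genuine gap in the sufficiency direction. The paper's own proof is much terser and takes a different route at every stage: uniqueness is a short geometric contradiction using Theorem~\ref{linearindep} (if two $\pi_{\Gamma_i}$ were square-integrable, a separating wall $\mathrm{Ker}\,\beta^\vee$ with $\beta^\vee\in S$ forces $\bar\Gamma_1\cup\bar\Gamma_2$ to contain the full line $\bigcap_{\alpha^\vee\in S\setminus\{\beta^\vee\}}\mathrm{Ker}\,\alpha^\vee$ through the origin, hence both $v$ and $-v$, contradicting Casselman positivity); necessity of the spanning condition is Lemma~\ref{dsn}, which is essentially Corollary~\ref{univirred} (if all corank-one reducibility sits in a proper Levi, every constituent is fully parabolically induced and so cannot be square-integrable); and sufficiency is not argued in detail but delegated to a direct check, with a reference to Heiermann's general theorem. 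In particular, the paper never claims that $\Gamma_+$ collapses to a single Weyl chamber, nor does it go through the ``socle of a standard module'' fact you quote for uniqueness.

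Your gap is the claim $\Gamma_+=C_M^+$, on which your entire sufficiency paragraph rests. From $^{0}\mathfrak a_M^\star=\mathfrak a_M^\star$ you deduce via Lemma~\ref{key2} that $W_M^1=\{1\}$, but that lemma is only the decomposition $W_M=W_M^0\rtimes W_M^1$ and says nothing of the sort: elements of $W_M^1$ act on $\Phi_M^0$ as diagram automorphisms and can be nontrivial even when $^{0}\mathfrak a_M^\star$ is full. More seriously, you then assert that each $\mathrm{Ker}(\alpha^\vee)$, $\alpha^\vee\in S$, is a wall of $C_M^+$, i.e.\ that $S=(\Delta_M^0)^\vee$; but Theorem~\ref{linearindep} only gives linear independence of $S$, not that $S$ is the set of simple coroots of $\Phi_M^0$. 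Without $\Gamma_+=C_M^+$ the Jacquet module $r_P(\pi_{\Gamma_+})$ has many terms beyond $\rho$, and your single-exponent check does not suffice. Note also that your inclusion $\bar{\mathfrak a}_M^{\star+}\setminus\{0\}\subset{}^+\mathfrak a_M^\star$ is phrased for $\Delta_M$, whereas the chamber $C_M^+$ is cut out by $\Delta_M^0$, and the two are genuinely different (cf.\ the paper's $SO_{12}$ example). Your converse paragraph has analogous unproved assertions (``some $w_1\omega_\rho$ escapes ${}^+\mathfrak a_M^\star$'', ``produces a strictly negative $\Delta_M$-coordinate''); these are plausible heuristics, but they are exactly what needs to be shown. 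The paper sidesteps all of this combinatorics by outsourcing necessity to Corollary~\ref{univirred} and sufficiency to Heiermann.
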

\begin{prop}(cf. Proposition \ref{tem})
	Keep the notions as before. $JH(Ind^G_P(\rho))$ contains at most one tempered constituent. Moreover the representation $\pi_\Gamma$ is tempered if and only if $\omega_\rho$ restricting to the subgroup $\bigcap\limits_{\alpha^\vee\in S}Ker(\alpha^\vee)$ of $M$ is unitary and $\Gamma=\Gamma_+$.
\end{prop}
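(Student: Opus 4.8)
The plan is to mirror the proof of Proposition~\ref{ds}, replacing Casselman's square-integrability criterion by its temperedness counterpart. Recall that $\pi_\Gamma$ is tempered if and only if, for every standard parabolic $Q=LN_Q$, every real exponent of the Jacquet module $r_Q(\pi_\Gamma)$ lies in the appropriate closed cone $^{+}\bar{\mathfrak{a}}_L^{\star}$ attached to $\Delta_L$. Since $\rho$ is supercuspidal, $r_Q(\pi_\Gamma)=0$ unless $L$ contains a conjugate of $M$, and for $L\supseteq M$ transitivity of the Jacquet functor together with Theorem~\ref{mthm} shows that the $A_L$-exponents of $r_Q(\pi_\Gamma)$ are among the restrictions to $A_L$ of the finite family $\{w\omega_\rho:\ wC_M^+\subseteq\Gamma\}$. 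Thus the whole criterion collapses to a system of linear inequalities on this finite subset of $^{0}\mathfrak{a}_M^{\star}$, indexed by the Levi subgroups $L$ with $M\subseteq L$.

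First I would show that a tempered $\pi_\Gamma$ forces $\Gamma=\Gamma_+$; this is the temperedness-analogue (closed cone in place of open cone) of the corresponding step in Proposition~\ref{ds}. Fix a wall $Ker(\alpha^\vee)$ with $\alpha^\vee\in S$: the co-rank one reducibility encoded by the pole of $\mu_\alpha$ at $\rho$, whose location is uniform by \cite{silberger1980special} and Theorem~\ref{linearindep}, forces, via Casselman's criterion applied on the relevant Levi, the sign $\langle\alpha^\vee,\Gamma\rangle>0$; running over all $\alpha^\vee\in S$ yields $\Gamma=\Gamma_+$. In particular $JH(Ind_P^G(\rho))$ contains at most one tempered constituent (and at most one square-integrable one). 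Since each $\alpha^\vee\in S$ has $\alpha\in(\Phi_M^0)^+$, one has $C_M^+\subseteq\Gamma_+$, so by Theorem~\ref{mthm} the representation $\rho$ itself occurs in $r_P(\pi_{\Gamma_+})$, which is why $\pi_{\Gamma_+}$ is the candidate tempered constituent.

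Next I would run Casselman's criterion for $\pi_{\Gamma_+}$ through the whole tower of Levis $L\supseteq M$ at once. The exponents appearing are $\omega_\rho$ together with its $W_M^0$-translates $w\omega_\rho$ with $wC_M^+\subseteq\Gamma_+$ — equivalently $w^{-1}\alpha^\vee>0$ for every $\alpha^\vee\in S$, so the same co-rank one analysis reduces the inequalities for the translates to those for $\omega_\rho$ itself. Since $\omega_\rho\in\bar{\mathfrak{a}}_M^{\star+}$ and $\Gamma_+$ is cut out precisely by the poles in $S$, the resulting system of closed-cone conditions, taken over all $L\supseteq M$ simultaneously, holds if and only if $\omega_\rho$ is orthogonal to $\bigcap_{\alpha^\vee\in S}Ker(\alpha^\vee)$, i.e. $\omega_\rho\in Span_{\mathbb{R}}\{\alpha:\alpha^\vee\in S\}$, i.e. the restriction of $\omega_\rho$ to the subgroup $\bigcap_{\alpha^\vee\in S}Ker(\alpha^\vee)$ of $M$ is unitary. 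In the square-integrable case one needs in addition $^{0}\mathfrak{a}_M^{\star}=\mathfrak{a}_M^{\star}=Span_{\mathbb{R}}\{\alpha:\alpha^\vee\in S\}$, so that no noncompact direction is left over; the difference between the two conditions is exactly the gap between $L^2$ and $L^{2+\varepsilon}$ decay of matrix coefficients.

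The main obstacle I expect is precisely this last step: translating the simultaneous Casselman conditions over the tower $L\supseteq M$ into the single clean statement $\omega_\rho\in Span_{\mathbb{R}}\{\alpha:\alpha^\vee\in S\}$. The complication, absent from Rodier's split principal-series setting, is that $W_M$ is not a Coxeter group, so $W_M=W_M^0\rtimes W_M^1$ and possibly $^{0}\mathfrak{a}_M^{\star}\subsetneq\mathfrak{a}_M^{\star}$; keeping track of the chambers $\Gamma$, of which $\rho^w$ actually occur in $r_P(\pi_{\Gamma_+})$, and of the extra noncompact directions coming from $W_M^1$ is what makes the bookkeeping delicate, and it is here that Lemmas~\ref{key1} and \ref{key2} together with the linear independence of $S$ (Theorem~\ref{linearindep}) are used crucially. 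An alternative and possibly cleaner route for the sufficiency direction: a constituent of $Ind_P^G(\rho)$ is tempered exactly when it occurs in $Ind_Q^G(\tau)$ for some $Q=LN_Q\supseteq P$ and some square-integrable $\tau$ in $Ind_{P\cap L}^{L}(\rho^w)$; applying Proposition~\ref{ds} inside $L$ shows that such a $\tau$, which necessarily has unitary $A_L$-central character, exists if and only if the condition on $\omega_\rho$ above holds, and tracing the cuspidal support back identifies the resulting tempered constituent with $\pi_{\Gamma_+}$.
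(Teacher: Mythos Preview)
Your proposal is correct and follows essentially the same approach as the paper: the paper's own proof is a one-line ``the same type argument as above works (cf.\ \cite[Proposition~6]{rodier1981decomposition})'', referring back to Proposition~\ref{ds}, and what you have written is precisely a fleshing-out of that remark, replacing the open-cone square-integrability criterion by the closed-cone temperedness criterion and invoking Theorem~\ref{linearindep} in the same way. Your alternative route via realizing the tempered constituent inside $Ind_Q^G(\tau)$ for a discrete series $\tau$ on a Levi $L\supseteq M$ is not in the paper but is a legitimate (and arguably cleaner) way to handle the sufficiency direction.
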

Along the way, we also provide a simple proof of Casselman--Shahidi's main theorem \cite[Theorem 1]{casselman1998irreducibility} which says that
\begin{thm}(cf. Theorem \ref{generic})
	For the standard representation $Ind^G_P(\rho)$ with $\rho$ a generic supercuspidal representation of the Levi subgroup $M$ of a standard parabolic subgroup $P=MN$ in a connected quasi-split reductive group $G$, we have that 
	\[\mbox{ the unique generic subquotient of }Ind^G_P(\rho) \mbox{ is a subrepresentation.} \] 
\end{thm}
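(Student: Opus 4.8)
The plan is to deduce the statement from the Rodier-type structure theorem (Theorem~\ref{mthm}) together with two classical inputs --- Rodier's hereditary property of Whittaker functionals and the multiplicity-one of Whittaker models for quasi-split groups. Fix a non-degenerate character $\psi$ of a maximal unipotent $U$ of $G$, write $\psi_M$ for its restriction to $U\cap M$, and recall $\omega_\rho\in\bar{\mathfrak a}^{*+}_M$. By Rodier's theorem on Whittaker functionals of parabolic inductions, $\dim\mathrm{Wh}_\psi(Ind^G_P(\rho))=\dim\mathrm{Wh}_{\psi_M}(\rho)=1$; since the twisted Jacquet functor $V\mapsto V_{U,\psi}$ is exact, $\sum_{\pi\in JH(Ind^G_P(\rho))}\dim\mathrm{Wh}_\psi(\pi)=1$, so exactly one constituent $\pi_{\mathrm{gen}}$ is $\psi$-generic and it occurs with multiplicity one. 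On the other hand, by Frobenius reciprocity $\mathrm{Hom}_G(\pi_\Gamma,Ind^G_P(\rho))=\mathrm{Hom}_M(r_P(\pi_\Gamma),\rho)$, and by Theorem~\ref{mthm} the module $r_P(\pi_\Gamma)=\bigoplus_{wC^+_M\subset\Gamma}\rho^w$ is a direct sum of irreducible supercuspidals in which, by regularity of $\rho$, the summand $\rho$ occurs iff $C^+_M\subset\Gamma$, and then exactly once. Hence $Ind^G_P(\rho)$ has a \emph{unique} irreducible subrepresentation $\pi_{\Gamma_0}$ ($\Gamma_0$ the component containing $C^+_M$), occurring in the socle with multiplicity one; so the theorem is equivalent to the single assertion $\pi_{\mathrm{gen}}=\pi_{\Gamma_0}$, i.e. that $\pi_{\Gamma_0}$ is $\psi$-generic.

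To prove $\pi_{\Gamma_0}$ is generic I would use Rodier's explicit realization of the (essentially unique) Whittaker functional $\lambda$ on $Ind^G_P(\rho)$ as an integral over the open Bruhat cell, built from a fixed non-zero Whittaker functional $\lambda_\rho$ of $\rho$. The hypothesis $\omega_\rho\in\bar{\mathfrak a}^{*+}_M$ is exactly what places this integral in its domain of absolute convergence --- equivalently, it forces the pertinent local coefficients, governed in co-rank one by the Plancherel measures $\mu_\alpha$ whose pole structure and the linear independence of $S$ are the content of Theorem~\ref{linearindep}, to be holomorphic and non-zero at $\rho$, so that no renormalization can annihilate the functional. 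Matching the Bruhat-cell filtration of $Ind^G_P(\rho)|_P$ against the Jacquet-module formula of Theorem~\ref{mthm}, the open cell contributes exactly the term $\rho=\rho^{1}$ of $r_P(Ind^G_P(\rho))$, which by the first paragraph appears only in $r_P(\pi_{\Gamma_0})$; hence $\lambda$ does not vanish on $\pi_{\Gamma_0}$, so $\pi_{\Gamma_0}$ is $\psi$-generic and equals $\pi_{\mathrm{gen}}$.

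The main obstacle is this last localization: showing that Rodier's open-cell functional sees the socle $\pi_{\Gamma_0}$ rather than some other constituent. It is here that dominance of $\omega_\rho$ is indispensable --- for anti-dominant $\omega_\rho$ the unique generic constituent is a quotient rather than a subrepresentation --- and here that Theorem~\ref{linearindep} does genuine work: cutting $Ind^G_P(\rho)$ along the hyperplanes $Ker(\alpha^\vee)$, $\alpha^\vee\in S$, reduces the verification to the co-rank-one situation, where $Ind^G_P(\rho)$ has at most two constituents, the socle is the kernel of the long intertwining operator, and Rodier's integral is non-zero on it because $\lambda_\rho\ne0$; the linear independence of $S$ is precisely what lets one reassemble these co-rank-one verdicts chamber by chamber, along the lines of \cite{rodier1981decomposition} and \cite{casselman1998irreducibility}.
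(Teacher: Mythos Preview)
Your first paragraph is correct and coincides with the paper's setup: via Frobenius reciprocity and Theorem~\ref{mthm} the induced representation has a unique irreducible subrepresentation $\pi_{\Gamma_0}$, the one with $C_M^+\subset\Gamma_0$, and the theorem reduces to showing that $\pi_{\Gamma_0}$ is $\psi$-generic.

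From this point on the paper takes a different and considerably shorter route than you propose. It does \emph{not} analyze the open-cell Whittaker integral. Instead it quotes Langlands--Shahidi theory \cite{shahidi1990proof} as a black box in co-rank one: for each pair $(\alpha,w)$ with $w.\alpha^\vee\in S$ the module $Ind^{M_\alpha}_{P\cap M_\alpha}(\rho^w)$ is standard, and Shahidi's result says that $Ker\big(A(w,ww_\alpha)\big)$ is the $\theta$-generic piece. Feeding the Jacquet-module description $(SB)$ of these kernels into Theorem~\ref{mthm} immediately identifies the generic constituent as $\pi_{\Gamma_+}$ with $\Gamma_+=\bigcap_{\alpha^\vee\in S}(\alpha^\vee)^{-1}(\mathbb R^+)$; since $S$ consists of positive coroots one has $C_M^+\subset\Gamma_+$, so $\Gamma_+=\Gamma_0$ and the proof is complete. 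In particular Theorem~\ref{linearindep} plays \emph{no} role here; your assertion that linear independence of $S$ ``does genuine work'' in this argument is mistaken.

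Your own approach --- showing directly that the convergent Jacquet--Whittaker integral is non-zero on the socle --- is closer in spirit to the original Casselman--Shahidi argument and can be made to work, but the key step as you have written it is a non-sequitur. The sentence ``the open cell contributes exactly the term $\rho=\rho^1$ of $r_P(Ind^G_P(\rho))$, which \dots\ appears only in $r_P(\pi_{\Gamma_0})$; hence $\lambda$ does not vanish on $\pi_{\Gamma_0}$'' conflates two different functors: the twisted Jacquet functor $V\mapsto V_{U,\psi}$ governing $\lambda$, and the ordinary Jacquet functor $r_P$ governing Theorem~\ref{mthm}. That $\rho$ sits in $r_P(\pi_{\Gamma_0})$ does not by itself force $\lambda|_{\pi_{\Gamma_0}}\ne 0$. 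What actually closes this gap is the local-coefficient calculus you allude to --- one tracks $\lambda$ through the standard intertwining operators and checks non-vanishing of Shahidi's local coefficients in the dominant region --- but that is precisely the Langlands--Shahidi input the paper chooses to cite rather than reprove.
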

At last, we would like to mention that Tadi{\'c} has worked out Rodier type structure theorem for quasi-split groups $GSp_{2n}, Sp_{2n}$ and $SO_{2n+1}$ for which he does not have to overcome any difficulties as mentioned above (cf. \cite{tadic1998regular}).

In the end, we give the outline of the paper. In Section 2, some necessary notions of representation theory are introduced. In Section 3, we will first state and prove the Rodier type structure theorem for regular generalized principal series which roughly speaking is a parametrization of the associated Jordan--H\"{o}lder set $JH(Ind^G_P(\rho))$. In Section 4, we prove the linear independence property of the set $S$. Section 5 is about the parametrization/characterization of the constituents of discreteness/temperedness of generalized principal series, while the determination of the generic constituents will be discussed in Section 6. The last section is to see how Aubert duality behaves under our setting.

	\paragraph*{\textbf{Acknowledgements}}
	I are much indebted to Professor Wee Teck Gan for his guidance and numerous discussions on various topics. I would like to thank Martin Raum for his kindness and support. I would like to thank Professor Marko Tadi\'{c} for patiently answering my questions, and thank Max Gurevich for his seminar talk on a conjectural criterion of the irreducibility of parabolic inductions for $GL_n$ in the National University of Singapore which rekindles our enthusiasm to explore the mysterious internal structures of parabolic inductions. I would also like to thank Professor Chengbo Zhu for his financial support during my last month stay in Singapore.
\section{preliminaries}
\subsection{Weyl group}Let $G$ be a connected reductive group defined over a non-archimedean local field $F$ of characteristic 0. For our purpose, it is no harm to assume that the center $Z_G$ of $G$ is compact. Denote by $|-|_F$ the absolute value, by $\mathfrak{w}$ the uniformizer and by $q$ the cardinality of the residue field of $F$. Fix a Borel subgroup $B=TU$ of $G$ with $T$ a minimal Levi subgroup and $U$ a maximal unipotent subgroup of $G$, and let $P=MN$ be a standard parabolic subgroup of $G$ with $M$ the Levi subgroup and $N$ the unipotent radical.

Let $X(M)_F$ be the group of $F$-rational characters of $M$, and set 
\[\mathfrak{a}_M=Hom(X(M)_F,\mathbb{R}),\qquad\mathfrak{a}^\star_{M,\mathbb{C}}=\mathfrak{a}^\star_M\otimes_\mathbb{R} \mathbb{C}, \]
where
\[\mathfrak{a}^\star_M=X(M)_F\otimes_\mathbb{Z}\mathbb{R} \]
denotes the dual of $\mathfrak{a}_M$. Recall that the Harish-Chandra homomorphism $H_P:M\longrightarrow\mathfrak{a}_M$ is defined by
\[q^{\left< \chi,H_P(m)\right>}=|\chi(m)|_F \] 
for all $\chi\in X(M)_F$.

Next, let $\Phi$ be the root system of $G$ with respect to $T$, and $\Phi^+$ $(resp.~\Delta)$ be the set of positive (resp. simple) roots determined by $U$. For $\alpha\in \Phi$, we denote by $\alpha^\vee$ the associated coroot, and by $w_\alpha$ the associated reflection in the Weyl group $W$ of $T$ in $G$ with
\[W=W_T:=N_G(T)/T=\left<w_\alpha:~\alpha\in\Phi\right>. \]
The \underline{walls} in $\mathfrak{a}_T^\star$ are the hyperplanes $Ker~\alpha^\vee$. The \underline{Weyl chambers} in $\mathfrak{a}^\star_T$ are the connected components of the set
\[\mathfrak{a}^\star_T-\bigcup_{\alpha\in \Phi^+}Ker~\alpha^\vee  \]
on which the Weyl group $W$ acts simply transitively (cf. \cite{springer2010linear}). We denote by $C^+$ the dominant Weyl chamber determined by $B$. Denote by $w_0^G$ the longest Weyl element in $W$, and similarly by $w_0^M$ the longest Weyl element in the Weyl group $W^M$ of a Levi subgroup $M$. 
 
\subsection{Relative Weyl group}Likewise, we denote by $\Phi_M$ (resp. $\Phi_M^+$) the set of reduced relative (resp. positive) roots of $M$ in $G$, by $\Delta_M$ the set of relative simple roots determined by $N$ and by $W_M:=N_G(M)/M$ the relative Weyl group of $M$ in $G$. In general, a relative reflection $\omega_\alpha:=w_0^{M_\alpha}w_0^M$ with respect to a relative root $\alpha$ does not preserve our Levi subgroup $M$. Denote by $\Phi^0_M$ (resp. $(\Phi_M^0)^+$) the set of those relative (resp. positive) roots which contribute reflections in $W_M$. It is easy to see that $W_M$ preserves $\Phi_M$, and further $\Phi_M^0$ as well, as $\omega_{w.\alpha}=w\omega_\alpha w^{-1}$. Note that $W_M$ in general is larger than the one generated by those relative reflections, for example,
\[M\simeq GL_1\times GL_3\subset SO_8, \]
where $W_M\simeq \mathbb{Z}/2\mathbb{Z}$, while there are no relative reflections preserving $M$. For other parabolic subgroup $P'=MN'$, we denote $\Phi_M(P')$ to be the set of reduced relative roots determined by $N'$. In particular, if $P'=P$, then $\Phi_M(P)=\Phi_M^+$.

For our purpose, we define the ``small'' relative Weyl group $W_M^0\subset W_M$ to be the one generated by those relative reflections, i.e.
\[W_M^0:=\left<w_\alpha:~\alpha\in \Phi^0_M \right>. \]
It is easy to see that, as $w\omega_{\alpha}w^{-1}=\omega_{w.\alpha}$ for $\alpha\in \Phi_M^0$ and $w\in W_M$, 
\[W_M^0\lhd W_M. \]
The relative \underline{walls} in the ``small'' character vector spaces $$^{0}\mathfrak{a}^\star_M:=Span_\mathbb{R}\{\alpha:~\alpha\in \Phi_M^0 \}$$ are those hyperplanes $Ker~\alpha^\vee$ which contribute reflections in $W_M$. The relative \underline{Weyl chambers} are the connected components of the set
\[^{0}\mathfrak{a}^\star_M-\bigcup_{\alpha\in (\Phi^0_M)^+} Ker~\alpha^\vee\]
on which the ``small'' relative Weyl group $W_M^0$ acts. An observation is that 
\[\mbox{$W_M^0$ acts simply transitively on the set of relative Weyl chambers,}\]
which follows from the fact that $\Phi_M^0$ is a root system, may not be irreducible. Denote by $\Delta_M^0$ the relative simple roots of $\Phi_M^0$. Note that $\Phi_M^0$ is quite different with $\Phi_M$ in general, as well as $\Delta_M^0$ and $\Delta_M$, for example,
\[M\simeq GL_2\times GL_4\subset SO_{12}, \]
where $\Delta_M=\big\{e_1-e_2,e_2\big\}\subset \Phi_M=\big\{\pm e_1\pm e_2,\pm e_1,\pm e_2 \big\}$, while $\Delta_M^0=\big\{e_1,e_2 \big\}\subset \Phi_m^0=\big\{\pm e_1,\pm e_2 \big\}.$

We denote by $C^+_M$ the relative dominant Weyl chamber in $^{0}\mathfrak{a}_M^\star$ determined by $P$. Recall that the canonical pairing $$\left<-,-\right>:~\mathfrak{a}^\star_M\times \mathfrak{a}_M\longrightarrow\mathbb{Z}$$ suggests that each  $\alpha\in \Phi_M$ will enjoy a one parameter subgroup $H_{\alpha^\vee}(F^\times)$ of $M$ satisfying: for $x\in F^\times$ and $\beta\in \mathfrak{a}^\star_M$,
\[\beta_\alpha(x):=\beta(H_{\alpha^\vee}(x))=x^{\left<\beta,\alpha^\vee\right>}. \]

\subsection{Parabolic induction and Jacquet module}For $P=MN$ a parabolic subgroup of $G$ and an admissible representation $(\sigma,V_\sigma)~ (resp.~(\pi,V_\pi))$ of $M~(resp.~G)$, we have the following normalized parabolic induction of $P$ to $G$ which is a representation of $G$
\[Ind_P^G(\sigma):=\big\{\mbox{smooth }f:G\rightarrow V_\sigma|~f(nmg)=\delta_P(m)^{1/2}\sigma(m)f(g), \forall n\in N, m\in M~and~g\in G\big\} \]
with $\delta_P$ stands for the modulus character of $P$, i.e., denote by $\mathfrak{n}$ the Lie algebra of $N$,
\[\delta_P(nm)=|det~Ad_\mathfrak{n}(m)|_F, \]
and the normalized Jacquet module $r_P(\pi)$ with respect to $P$ which is a representation of $M$
\[\pi_N:=V_\pi/\left<\pi(n)e-e:~n\in N,e\in V_\pi\right>. \] 

\subsection{Co-rank one reducibility}For $\alpha\in \Phi_M$, let $M_\alpha\supset M$ be the co-rank one Levi subgroup determined by $\alpha$, co-rank one reducibility means the reducibility of those $Ind^{M_\alpha}_{P\cap M_\alpha}(\sigma)$ for $\alpha\in \Phi_M$. Notice that $Ind^{M_\alpha}_{P\cap M_\alpha}(\sigma)$ is always irreducible if $\alpha\notin \Phi^0_M$ (cf. \cite{silberger1980special}), so we will only talk about the co-rank one reducibility associated to those $\alpha\in \Phi^0_M$ in the paper.

\subsection{Whittaker model}For this purpose, we shall assume that $G$ is quasi-split. For each root $\alpha\in\Phi$, there exists a non-trivial homomorphism $X_\alpha$ of $F$ into $G$ such that, for $t\in T$ and $x\in F$, 
\[tX_\alpha(x)t^{-1}=X_\alpha(\alpha(t)x). \]
We say a character $\theta$ of $U$ is \underline{generic} if the restriction of $\theta$ to $X_\alpha(F)$ is non-trivial for each simple root $\alpha\in \Delta$. Then the Whittaker function space $\mathcal{W}_\theta$ of $G$ with respect to $\theta$ is the space of smooth complex functions $f$ on $G$ satisfying, for $u\in U$ and $g\in G$,
\[f(ug)=\theta(u)f(g), \]
i.e. $\mathcal{W}_\theta=Ind_U^G(\theta)$. We say an irreducible admissible representation $\pi$ of $G$ is \underline{$\theta$-generic} if
\[\pi\xrightarrow[non-trivial]{G-equiv.} \mathcal{W}_\theta. \]

\subsection{Square-integrability/Temperedness}In what follows, we recall Casselman's square-integrability and temperedness criterion. For our purpose, we only state it under the condition that the inducing datum $\rho$ is supercuspidal, i.e. $\pi\in JH(Ind^G_P(\rho))$ with $\rho$ supercuspidal representation of $M$, here $JH(-)$ means the set of Jordan--H\"{o}lder constituents.

Let $^{+}\mathfrak{a}^\star_M ~(resp.~ ^{+}\bar{\mathfrak{a}}^\star_M)$ be the set of such $\chi\in \mathfrak{a}_M^\star$ of the form
\[\chi=\sum\limits_{\alpha\in \Delta_M}x_\alpha \alpha,\]
with all the coefficients $x_\alpha>0~(resp.~x_\alpha\geq 0)$.
Denote by $\mathfrak{a}_M^{*+}$ $(resp. \bar{\mathfrak{a}}_M^{*+})$ the (resp. closure of) dominant Weyl chamber in $\mathfrak{a}_M^*$ determined by $\Delta_M$. For an admissible representation $(\tau,V_\tau)$ of $M$, we define the set $\mathcal{E}xp(\tau)$ of exponents of $\tau$ as follows:
\[\mathcal{E}xp(\tau)=\big\{\chi\in\mathfrak{a}_{M,\mathbb{C}}^\star:~V_\tau[\chi]\neq 0 \big\}, \]
where $Z_M$ is the center of $M$ and 
\[V_\tau[\chi]=\big\{e\in V:~\exists d\in \mathbb{N}, \forall t\in Z_M, (\tau(t)-\chi(t))^d e=0 \big\}. \] 
Keep the notation as above, $\pi\in JH(I(\nu,\sigma))$ is square-integrable (resp. tempered) if and only if for each standard parabolic subgroup $Q=LV$ associated to $P=MN$, i.e. $L$ and $M$ are conjugate,  \[Re(\mathcal{E}xp(\pi_V))\subset~^{+}\mathfrak{a}^\star_M~ (resp.~ ^{+}\bar{\mathfrak{a}}^\star_M).\]

\section{rodier type structure theorem of generalized principal series}
Recall that an irreducible supercuspidal representation $\tau$ of $M$ is called \underline{regular} in $G$ if the only element $w\in W_M$ such that $\tau^w\simeq \tau$ is the identity element. A parabolic induction $Ind^G_{P=MN}(\rho)$ is called a generalized principal series if the inducing data $\rho$ is a supercuspidal representation of the Levi subgroup $M$ of $P$. Furthermore, if our inducing data is a regular supercuspidal representation, we call the associated induced representation a regular generalized principal series. In this section, we will first recall Rodier's structure theorem of the constituents of regular principal series of split groups (see \cite[Theorem, Pg.418]{rodier1981decomposition}), then extend it to regular generalized principal series of arbitrary connected reductive group and its finite central covering group.

Recall that in \cite{rodier1981decomposition}, for a regular character $\chi$ of the torus $T$ of the Borel subgroup $B=TU$ of a connected split reductive group $G$, let $S$ be the set of coroots $\alpha^\vee$ such that $\chi_\alpha=|\cdot|$, and $-S:=\{-\alpha^\vee:~\alpha\in S \}$. Then
\begin{thm}[Rodier structure theorem] (see \cite[Theorem, Pg. 418]{rodier1981decomposition})\label{rps}
	
	The constituents $\pi_\Gamma$ of the regular principal series $Ind^G_B(\chi)$ are parameterized by the connected components $\Gamma$ of $$\mathfrak{a}_T^\star-\bigcup_{\alpha^\vee\in S}Ker(\alpha^\vee)$$
	satisfying the following property:
	
	the Jacquet module $r_B(\pi_\Gamma)$ of $\pi_\Gamma$ with respect to $B$ is equal to 
	\[\bigoplus_{wC^+\subset \Gamma}\chi^w. \]
\end{thm}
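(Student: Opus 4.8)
The plan is to combine the geometric lemma with a rank-one analysis of intertwining operators---the blueprint of \cite{rodier1981decomposition} that the later sections of the present paper lift to an arbitrary Levi. First I would record that, $\chi$ being regular, the Bruhat filtration on $r_B(Ind_B^G\chi)$ is multiplicity free, hence splits:
\[
r_B\big(Ind_B^G(\chi)\big)\;\cong\;\bigoplus_{w\in W}\chi^w,
\]
with pairwise non-isomorphic summands. Exactness of $r_B$ then forces $Ind_B^G(\chi)$ to be multiplicity free and the Jacquet modules of its irreducible constituents to partition $\{\chi^w:w\in W\}$; for a constituent $\pi$ write $W(\pi)$ for the set of $w\in W$ with $\chi^w$ a constituent of $r_B(\pi)$, so the $W(\pi)$ partition $W$.

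Next I would move to a combinatorial picture on $W$, identified with the set of Weyl chambers via $w\leftrightarrow wC^+$. A standard rank-one computation (reduction to $SL_2$, $PGL_2$, $GL_2$; regularity discards the other possible reducibility $(\chi^w)_\alpha^2=1$) shows that, for $\alpha\in\Delta$, the representation $Ind_{B\cap M_\alpha}^{M_\alpha}(\chi^w)$ is irreducible if and only if the adjacent chambers $wC^+$ and $ww_\alpha C^+$ lie in one and the same component $\Gamma$, equivalently the wall separating them is none of the hyperplanes $\ker\beta^\vee$, $\beta^\vee\in S$. Hence the sets $W_\Gamma:=\{w:wC^+\subset\Gamma\}$ are exactly the connected components of the graph on $W$ with an edge joining $w$ to $ww_\alpha$ whenever $Ind_{B\cap M_\alpha}^{M_\alpha}(\chi^w)$ is irreducible. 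Along such an edge, put $\sigma:=Ind_{B\cap M_\alpha}^{M_\alpha}(\chi^w)$; then $r_{B\cap M_\alpha}^{M_\alpha}(\sigma)=\chi^w\oplus\chi^{ww_\alpha}$, and inducing in stages through the co-rank one parabolic $P_\alpha=M_\alpha N_\alpha$ gives $Ind_B^G(\chi^w)=Ind_{P_\alpha}^G(\sigma)=Ind_B^G(\chi^{ww_\alpha})$. By the geometric lemma, $r_{P_\alpha}^G(Ind_B^G\chi^w)$ has a filtration by rank-one principal series of $M_\alpha$ whose $T$-Jacquet modules are the pairs $\{\chi^u,\chi^{uw_\alpha}\}$ and together partition $\{\chi^u:u\in W\}$, the one carrying $\chi^w$ being $\sigma$ itself. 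Since $\chi$ is regular, $\sigma$ is thus the only subquotient of $r_{P_\alpha}^G(Ind_B^G\chi^w)$ whose $T$-Jacquet module meets $\{\chi^w,\chi^{ww_\alpha}\}$, so from $r_B=r_{B\cap M_\alpha}^{M_\alpha}\circ r_{P_\alpha}^G$ one gets, for every constituent $\pi$: $\chi^w\in r_B(\pi)$ iff $\sigma$ occurs in $r_{P_\alpha}^G(\pi)$ iff $\chi^{ww_\alpha}\in r_B(\pi)$; i.e. $w\in W(\pi)$ iff $ww_\alpha\in W(\pi)$. Running over all edges, each $W(\pi)$ is a union of the $W_\Gamma$, which already yields the asserted shape $r_B(\pi_\Gamma)=\bigoplus_{wC^+\subset\Gamma}\chi^w$ once the $\pi_\Gamma$ are shown to exist and to be pairwise distinct.

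It then remains to see that $Ind_B^G(\chi)$ has at least as many constituents as there are components $\Gamma$. From the above, $Ind_B^G(\chi^w)$ depends up to isomorphism only on the component of $wC^+$; call it $I_\Gamma$. For each $\Gamma$ I would pick a suitable representative chamber $w_\Gamma C^+\subset\Gamma$---one from which every bounding $S$-wall is crossed in the ``Steinberg'' direction---let $\pi_\Gamma$ be the irreducible quotient of $I_\Gamma=Ind_B^G(\chi^{w_\Gamma})$ singled out by a Langlands-type leading-exponent condition, and then follow $\mathcal{E}xp(r_B(\pi_\Gamma))$ through the composite standard intertwining operator $Ind_B^G(\chi^{w_\Gamma})\to Ind_B^G(\chi^{w_\Gamma w})$: across a non-$S$-wall it is an isomorphism by the previous paragraph, whereas across an $S$-wall at the reducibility point $\chi$ it is singular, its one-dimensional kernel killing exactly the exponent on the far side. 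This confines $\mathcal{E}xp(r_B(\pi_\Gamma))$ to $\{\chi^u:uC^+\subset\Gamma\}$, hence $W(\pi_\Gamma)\subset W_\Gamma$; with the previous paragraph this forces $W(\pi_\Gamma)=W_\Gamma$, and the $\pi_\Gamma$ are pairwise non-isomorphic because the $W_\Gamma$ are disjoint. Thus $\Gamma\mapsto\pi_\Gamma$ is the required bijection and the Jacquet-module formula follows.

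I expect the main obstacle to be this last step: showing the Jacquet support of a constituent cannot leak across an $S$-wall, which comes down to a precise description of the kernel and image of the rank-one standard intertwining operator at the reducibility point (Rodier's Proposition~3). This is exactly the input that does not transfer verbatim to a general Levi $M$, where $W_M$ is not a Coxeter group; circumventing it, as the introduction explains, requires the splitting $W_M=W_M^0\rtimes W_M^1$, the isomorphism $Ind_P^G(\rho^w)\simeq Ind_P^G(\rho^{ww_1})$, and the Langlands--Shahidi control of the pole of the co-rank one Plancherel measure $\mu_\alpha$.
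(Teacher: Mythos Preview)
Your first two paragraphs are correct and line up closely with the paper's analysis: regularity gives a multiplicity-free Jacquet module, so constituents are determined by their sets $W(\pi)$; and across a non-$S$-wall the rank-one induction $\sigma=Ind_{B\cap M_\alpha}^{M_\alpha}(\chi^w)$ is irreducible, whence $Ind_B^G(\chi^w)\simeq Ind_B^G(\chi^{ww_\alpha})$ and $W(\pi)$ is stable under crossing such walls. The paper phrases this slightly differently---it records the equivalence ``$\chi^{w'}\in r_B(\pi)$ iff $Ind_B^G(\chi^w)\simeq Ind_B^G(\chi^{w'})$'' via Frobenius reciprocity---but your Jacquet-module argument through $r_B=r_{B\cap M_\alpha}^{M_\alpha}\circ r_{P_\alpha}^G$ is an equally clean route to ``each $W(\pi)$ is a union of $W_\Gamma$'s''.

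The genuine gap is in your third paragraph. The phrase ``its one-dimensional kernel killing exactly the exponent on the far side'' conflates the rank-one picture with the $G$-level one: the kernel of $A(w,ww_\alpha):Ind_B^G(\chi^w)\to Ind_B^G(\chi^{ww_\alpha})$ is $Ind_{P_\alpha}^G$ of the rank-one kernel, and its $B$-Jacquet module is not a single exponent but the whole half-space $\{\chi^{w''}:w''C^+$ on the $wC^+$-side of $Ker(w.\alpha^\vee)\}$ (this is exactly the formula $(SB)$ in the paper). So ``tracking $\mathcal{E}xp(r_B(\pi_\Gamma))$'' through a composite of such operators does not localize one exponent at a time, and your Langlands-quotient construction of $\pi_\Gamma$, as written, does not yet yield $W(\pi_\Gamma)\subset W_\Gamma$. (There is also a direction issue: the image of the standard operator carries the far-side exponent, so a \emph{quotient} naturally lives on the far side, not inside $\Gamma$.)

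The paper closes this gap by a different and more direct device, the key observation $(KO)$: for a reduced decomposition $w^{-1}w'=w_{\alpha_1}\cdots w_{\alpha_s}$ one has
\[
A(w,w')=A(ww_{\alpha_1}\cdots w_{\alpha_{s-1}},w')\circ\cdots\circ A(w,ww_{\alpha_1}),
\]
and the whole point is that this composite is \emph{nonzero}. The proof is a Jacquet-module computation: using the half-space description $(SB)$ of each $r_B(Im(\cdot))$ together with the reduced-word formula $(RD)$ for $R(w^{-1}w')$, one checks that $\chi^{w'}$ lies in the intersection $Jim$ of all the image Jacquet modules. Once $(KO)$ is established, $A(w,w')$ is an isomorphism iff no wall in the minimal gallery lies in $S\cup(-S)$, i.e.\ iff $wC^+$ and $w'C^+$ lie in the same $\Gamma$; combined with the Frobenius-reciprocity equivalence above this gives the bijection $\Gamma\leftrightarrow\pi_\Gamma$ and the Jacquet-module formula at once, without having to single out a Langlands quotient. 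Incidentally, in this paper Rodier's Proposition~3 is cited for the linear independence of $S$ (needed later for square-integrability), not for the kernel/image description you allude to; the latter is the content of $(SB)$ and $(KO)$ here.
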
 
Before turning to our Rodier type structure theorem for regular generalized principal series of arbitrary connected reductive groups, we first investigate the main ideas behind Rodier structure theorem which has not been pointed out explicitly in \cite{rodier1981decomposition}. Once those ideas are streamlined clearly, it would be readily to see how simple yet beautiful our Rodier type structure theorem is. 

For regular principal series $Ind^G_B(\chi)$, we have that, as representations of $T$, 
\[r_B(Ind^G_B(\chi))=r_B(Ind^G_B(\chi^w))=\bigoplus_{w'\in W_T}\chi^{w'}, \]
for any $w\in W_T$. Applying Frobenius reciprocity, we know that 
$Ind^G_B(\chi^w)$ has a unique irreducible subrepresentation, and
$$Hom_G(Ind^G_B(\chi^w),~Ind^G_B(\chi^{w'}))\simeq \mathbb{C}$$
for any $w,~w'\in W_T$.

As the Jacquet module functor is exact (cf. \cite{bernstein1977induced,casselman1995introduction,waldspurger2003formule}), so for any 
$\pi\in JH(Ind^G_B(\chi))$ and any $w\in W_T$, we know that $\pi$ is of multiplicity at most one in $Ind^G_B(\chi^w)$ and is uniquely determined by its Jacquet module $r_B(\pi)$ with respect to $B$. Moreover, for any $\chi^w\in r_B(\pi)$,
\[Ind^G_B(\chi^w)\simeq Ind^G_B(\chi^{w'})\mbox{ if and only if }\chi^{w'}\in r_B(\pi). \] 
Therefore, the determination of the set $JH(Ind^G_B(\chi))$ of the constituents of $Ind^G_B(\chi)$ is equivalent to determining the orbits $\mathcal{O}$ of the set $\{\chi^w:~w\in W_T \}$ under the equivalent relation $\sim$: for $w,~w'\in W_T$,
\[\chi^w\sim \chi^{w'} \mbox{ if and only if } A(w,w')\mbox{ is an isomorphism,} \]
where $A(w,w')$ is the unique, up to scalar, non-zero $G$-equivalent homomorphism in $$Hom_G(Ind^G_B(\chi^w),~ Ind^G_B(\chi^{w'}))\simeq \mathbb{C}.$$
For simplicity, we will abbreviate those intertwining operators $A(w,w')$ as $A$ in what follows.

The next step is to give a characterization of those pairs $(w,w')\subset W_T$ satisfying
\[Ind^G_B(\chi^w)\simeq Ind^G_B(\chi^{w'}). \]
Let us first take a look at the simple basic case, i.e. the pairs $(w,w')$ with $w'=ww_\alpha$ for some simple root $\alpha$. That is to say
\[wC^+\mbox{ and }w'C^+\mbox{ share the same wall }Ker(w.\alpha^\vee). \]
For such a pair, via the induction by stage property of parabolic inductions and the uniqueness property of the intertwining operators $A$, we have also the induction by stage property of $A$, i.e. the following diagram commutes:
\[\xymatrix{Ind^G_{B}(\chi^{w})\ar[r]^A\ar@{=}[d]&Ind^G_{B}(\chi^{ww_{\alpha}})\ar@{=}[d]\\
Ind^G_{P_\alpha}\circ Ind^{M_\alpha }_{B\cap M_\alpha}(\chi^{w})\ar[r]^-{Ind(A)}&Ind^G_{P_\alpha}\circ Ind^{M_\alpha }_{B\cap M_\alpha}(\chi^{ww_{\alpha}}),       } \]
where $P_\alpha=M_\alpha N_\alpha$ is the co-rank one parabolic subgroup associated to the simple root $\alpha$ (cf. \cite{silberger2015introduction}). Whence
\[Ind^G_B(\chi^w)\simeq Ind^G_B(\chi^{ww_\alpha})  \]
if and only if
\[Ind^{M_\alpha}_{B\cap M_\alpha}(\chi^{w})\simeq Ind^{M_\alpha }_{B\cap M_\alpha}(\chi^{ww_{\alpha}})\]
if and only if
\[(\chi^w)_\alpha=\chi_{w.\alpha}\neq |\cdot|^{\pm 1}.\tag*{($\star$)} \]
Moreover, if $(\star)$ does not hold, i.e. $\chi_{w.\alpha}=|\cdot|^{\pm 1}$, which is to say that $Ind^{M_\alpha}_{B\cap M_\alpha}(\chi^w)$ is reducible, then the Jacquet modules of $Ker(A)$ and $Im(A)$ with respect to $B$ are as follows:
\[r_B(Ker(A))=r_B\circ Ind^G_{P_\alpha}(Ker(A))=\bigoplus_{w''\in W_T:~w''^{-1}.\alpha>0}\chi^{ww''}\tag*{$(\star\star)$} \]
and
\[r_B(Im(A))=r_B\circ Ind^G_{P_\alpha}(Im(A))=\bigoplus_{w''\in W_T:~w''^{-1}.\alpha>0}\chi^{ww_\alpha w''}, \]
which follow from Bernstein--Zelevinsky geometrical lemma (see \cite{bernstein1977induced,casselman1995introduction,waldspurger2003formule}). Notice that any coroot $\beta'^\vee$ corresponding to a positive root $\beta'$ satisfies the following relations:
\[\left<w''.\beta,~w''.\beta'^\vee\right>=\left<\beta,~\beta'^\vee\right>>0\mbox{ for any }\beta\in C^+\mbox{ and any }w''\in W_T. \]
In particular,
\[\left<\beta,~\alpha^\vee \right>>0\mbox{ and }\left<\beta,~w''^{-1}.\alpha^\vee \right>=\left<w''.\beta,~\alpha^\vee \right>>0\mbox{ for any }\beta\in C^+\mbox{ and any } w''\in W_T\mbox{ s.t. }w''^{-1}.\alpha>0. \]
Thus we obtain a geometrical characterization of the double coset \[ P_\alpha\backslash G/B=\big\{w''\in W_T:~w''^{-1}.\alpha>0 \big\}=\big\{w''\in W_T:~w''C^+\mbox{ and }C^+ \mbox{ are on the same side of }Ker(\alpha^\vee) \big\}. \]
Applying the same argument as above after conjugating $C^+,~\alpha$ and $w''$ by $w$, we know that the Jacquet module $r_B(Ker(A))$ of $Ker(A)$ in $(\star\star)$ is equal to
\[r_B(Ker(A))=\bigoplus_{w''}\chi^{w''}, \]
where $w''$ runs over those Weyl elements in $W_T$ such that 
\[wC^+\mbox{ and }w''C^+\mbox{ are on the same side of }Ker(w.\alpha^\vee). \]
Analogously, 
\[r_B(Im(A))=\bigoplus_{w''}\chi^{w''},\tag{$SB$} \]
where $w''$ runs over those Weyl elements in $W_T$ such that 
\[w'C^+\mbox{ and }w''C^+\mbox{ are on the same side of }Ker(w.\alpha^\vee). \]
At last, we would like to emphasize that the condition $(\star)$ is equivalent to saying that
\[\mbox{the co-rank one parabolic induction associated to $w.\alpha$ is irreducible.} \]
Such a condition is the right language we need in our statement of Rodier type structure theorem later on.

Now we turn to the discussion of the general case, i.e. the pairs $(w,w')$ with $w'=ww_{\alpha_1}\cdots w_{\alpha_s}$ for some simple roots $\alpha_1,\cdots,\alpha_s$. We require that such a decomposition is minimal in the sense that it gives rise to a minimal gallery between $wC^+$ and $w'C^+$ (see \cite[Section 1.2]{casselman1995introduction}), i.e. $w^{-1}w'=w_{\alpha_1}\cdots w_{\alpha_s}$ is a reduced decomposition. Thus we have (see \cite[Lemma 8.3.2]{springer2010linear})
\[R(w^{-1}w'):=\big\{\alpha\in \Phi^+:~(w^{-1}w').\alpha<0 \big\}=\big\{\alpha_s, ~w_{\alpha_s}.\alpha_{s-1},~\cdots, ~(w_{\alpha_s}\cdots w_{\alpha_2}).\alpha_1 \big\}.\tag{$RD$} \]
The key observation for the general case is to show that 
\[A(w,w')=A(ww_{\alpha_1}\cdots w_{\alpha_{s-1}},w')\circ\cdots\circ A(ww_{\alpha_1},ww_{\alpha_1}w_{\alpha_2})\circ A(w,ww_{\alpha_1}),\tag{$KO$} \]
where $A(w_1,w_2)$ is the unique, up to scalar, nonzero intertwining operator in $$Hom_G(Ind^G_B(\chi^{w_1}),~Ind^G_B(\chi^{w_2}))\simeq \mathbb{C},$$ for any $w_1,~w_2\in W_T$.

Before moving to the proof of the claim $(KO)$, we first discuss its implication to our previous question, i.e. when is $A(w,w')$ an isomorphism? 

Given $(KO)$, it is easy to see that $A(w,w')$ is an isomorphism if and only if
\[\big\{w.\alpha_1^\vee, ww_{\alpha_1}.\alpha_2^\vee,\cdots,ww_{\alpha_1}\cdots w_{\alpha_{s-1}}.\alpha_s^\vee \big\}\bigcap (S\bigcup -S)=\emptyset. \]
Thus Rodier structure theorem, i.e. Theorem \ref{rps} holds.

In what follows, we finish the proof of our claim $(KO)$. To show the equality $(KO)$ is to show that the composition map on the right hand side is nonzero, i.e. the Jacquet module of its image is nonzero. Notice that the Jacquet module of its image on the right hand side of $(KO)$ with respect to $B$ is equal to
\[Jim:=r_B(Im(A(w,ww_{\alpha_1})))\bigcap r_B(Im(A(ww_{\alpha_1},ww_{\alpha_1}w_{\alpha_2})))\bigcap\cdots\bigcap r_B(Im(A(ww_{\alpha_1}\cdots w_{\alpha_{s-1}},w'))), \]
which follows from the simple fact that Jacquet module is invariant under isomorphism. Then it reduces to show that $Jim\neq 0$. Recall that $(SB)$ says:
\begin{align*}
r_B&(Im(A(w,ww_{\alpha_1})))=\\
&\big\{w''\in W_T:~ww_{\alpha_1}C^+\mbox{ and }w''C^+\mbox{ are on the same side of }Ker(w.\alpha_1^\vee) \big\}, \\
r_B&(Im(A(ww_{\alpha_1},ww_{\alpha_1}w_{\alpha_2})))=\\
&\big\{w''\in W_T:~ww_{\alpha_1}w_{\alpha_2}C^+\mbox{ and }w''C^+\mbox{ are on the same side of }Ker(ww_{\alpha_1}.\alpha_2^\vee) \big\},\\
&\vdots\\
r_B&(Im(A(ww_{\alpha_1}\dots w_{\alpha_{s-1}},w')))=\\
&\big\{w''\in W_T:~w'C^+\mbox{ and }w''C^+\mbox{ are on the same side of }Ker(ww_{\alpha_1}\dots w_{\alpha_{s-1}}.\alpha_s^\vee) \big\}.  
\end{align*}
From the expression of $R(w^{-1}w')$ in $(RD)$, it is readily to check that    
\[\chi^{w'}\in Jim, \]
whence the claim $(KO)$ holds.

At last, we would like to see what explicit form of the set $Jim$ is for its own sake. As $$Jim\bigcup Jer=\big\{w''\in W_T:~\chi^{w''} \big\}\mbox{ and }Jim\bigcap Jer=\emptyset$$ where $Jer:=$  
\[r_B(Ker(A(w,ww_{\alpha_1})))\bigcup r_B(Ker(A(ww_{\alpha_1},ww_{\alpha_1}w_{\alpha_2})))\bigcup\cdots\bigcup r_B(Ker(A(ww_{\alpha_1}\cdots w_{\alpha_{s-1}},w'))), \]
it is equivalent to determine the set $Jer\overset{(KO)}{=}r_B(Ker(A(w,w')))$.
From the geometrical description of $r_B(Ker(A))$ in $(SB)$, we obtain that
\[Jer=r_B(Ker(A(w,w')))=\bigoplus_{w''\in Y}\chi^{w''} \]
where $Y$ is the set of $w''\in W_T$ for which there exists a coroot $\alpha^\vee\in S$ such that the chambers $wC^+$ and $w''C^+$ are on the same side of the wall $Ker(\alpha^\vee)$, and the chambers $wC^+$ and $w'C^+$ are separated by the wall. Moreover,
\[Jim=r_B(Im(A(w,w')))=\bigoplus_{w''\in W_T-Y}\chi^{w''}.  \]

Now we turn to the discussion of extending Rodier structure theorem, i.e. Theorem \ref{rps} to regular generalized principal series for arbitrary connected reductive group and its finite central covering group. Recall that the key ideas in the proof of Rodier structure theorem for regular principal series analyzed as above are as follows:
\begin{enumerate}[(i)]
	\item The double coset $B\backslash G/B=W_T=\left<w_\alpha:~\alpha\in \Delta\right>$ is a coxeter group.
	\item For a reduced decomposition of $w^{-1}w'=w_{\alpha_1}w_{\alpha_2}\cdots w_{\alpha_{s-1}}w_{\alpha_s}$, we have
	\[A(w,w')=A(ww_{\alpha_1}\cdots w_{\alpha_{s-1}},w')\circ\cdots\circ A(ww_{\alpha_1},ww_{\alpha_1}w_{\alpha_2})\circ A(w,ww_{\alpha_1}).\]
\end{enumerate}
For general standard parabolic group $P=MN$ of $G$ and supercuspidal representation $\rho$ of $M$, even though Bernstein--Zelevinsky geometrical lemma says that only elements in the relative Weyl group $W_M:=N_G(M)/M$ appear in $r_P\circ Ind^G_P(\rho)$ instead of the whole double coset $P\backslash G/P$, $W_M$ is NOT a coxeter group in general. To overcome such a difficulty, we discover two novel observations in what follows.

The first observation is about the structure of the relative Weyl group $W_M$. Recall that $\Phi^0_M$ (resp. $(\Phi_M^0)^+$) is the set of those relative (resp. positive) roots which contribute reflections in $W_M$ and $W_M^0$ is the ``small'' relative Weyl group $W_M^0:=\left<w_\alpha:~\alpha\in \Phi^0_M\right>$. Notice that $w_{w.\alpha}=ww_\alpha w^{-1}$ for any $w\in W_M$ and $\alpha\in \Phi_M$, we know that $W_M$ preserves $\Phi_M^0$, whence $W_M^0\lhd W_M$ and $\Phi_M^0$ is a root system. From the previous analysis of regular principal series, it is readily to see that all previous arguments works perfectly for the ``small'' relative Weyl group $W_M^0$ which is a coxeter group.    

Analogous to the definition of Knapp--Stein $R$-group, we define 
\[W_M^1:=\big\{w\in W_M:~w.(\Phi_M^0)^+>0 \big\}. \]
It is easy to see that $W_M^1$ is a subgroup of $W_M$, and $W_M^1\cap W_M^0=\{1\}$. Then it is natural to guess that $W_M=W_M^0 W_M^1$ which is stated as the following lemma.
\begin{lem}\label{key2}
	Keep the notation as above. Then we have
	\[W_M=W_M^0\rtimes W_M^1. \]
\end{lem}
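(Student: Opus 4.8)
The plan is to reduce the statement to the single identity $W_M = W_M^0\, W_M^1$. The excerpt already records the three structural facts $W_M^0 \lhd W_M$ (immediate from $w w_\alpha w^{-1} = w_{w.\alpha}$, which also shows that $W_M$ permutes $\Phi_M^0$), $W_M^1 \leq W_M$, and $W_M^0 \cap W_M^1 = \{1\}$. Granting these, the standard criterion for an internal semidirect product (a normal subgroup $N$, a subgroup $H$ with $N\cap H=\{1\}$ and $NH=G$) turns $W_M = W_M^0\, W_M^1$ into $W_M = W_M^0 \rtimes W_M^1$, the factorization $w = w_0 w_1$ with $w_0 \in W_M^0$, $w_1 \in W_M^1$ being automatically unique because $W_M^0 \cap W_M^1 = \{1\}$.

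First I would reformulate the subgroup $W_M^1$ geometrically. Because $W_M$ permutes $\Phi_M^0$, it acts linearly on $^{0}\mathfrak{a}^\star_M$ and permutes the finite set of walls $\{\, Ker~\alpha^\vee : \alpha \in (\Phi_M^0)^+ \,\}$ (one has $w.\,Ker~\alpha^\vee = Ker~(w.\alpha)^\vee$ with $w.\alpha \in \Phi_M^0$), hence it permutes the relative Weyl chambers. I claim the defining condition $w.(\Phi_M^0)^+ > 0$ of $W_M^1$ is equivalent to $w.C^+_M = C^+_M$: if $w.\alpha \in \Phi_M^+$ for every $\alpha \in (\Phi_M^0)^+$, then, since $w$ is a bijection of $\Phi_M^0$ onto itself and $(\Phi_M^0)^+ = \Phi_M^0 \cap \Phi_M^+$, a cardinality count forces $w.(\Phi_M^0)^+ = (\Phi_M^0)^+$, i.e. $w$ fixes the relative dominant chamber. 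Thus $W_M^1 = \mathrm{Stab}_{W_M}(C^+_M)$.

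The surjectivity is then a one-line orbit argument. The excerpt records that $W_M^0$, being the Weyl group of the (possibly reducible) root system $\Phi_M^0$, acts simply transitively on the set of relative Weyl chambers. Given $w \in W_M$, the chamber $w.C^+_M$ equals $w_0.C^+_M$ for a unique $w_0 \in W_M^0$; setting $w_1 := w_0^{-1} w \in W_M$ we get $w_1.C^+_M = C^+_M$, so $w_1 \in W_M^1$ by the previous paragraph, and therefore $w = w_0 w_1 \in W_M^0\, W_M^1$. This yields $W_M = W_M^0\, W_M^1$ and hence the lemma.

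I do not anticipate a serious obstacle; the two points that genuinely need care are the geometric identification $W_M^1 = \mathrm{Stab}_{W_M}(C^+_M)$ carried out above, and the input already granted in the excerpt that $\Phi_M^0$ is a bona fide root system whose Weyl group is $W_M^0$ (so that simple transitivity on chambers is available, this being the old observation of Lusztig et al.). Everything else is formal group theory.
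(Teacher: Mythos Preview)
Your argument is correct, and it differs from the paper's. The paper proves $W_M=W_M^0\,W_M^1$ by a descent on the length function: given $w\in W_M$, either $w.\Delta_M^0>0$ and then $w\in W_M^1$, or else some $\alpha\in\Delta_M^0$ has $w.\alpha<0$, in which case one writes $w=w_\alpha w'$ with $\mathfrak l(w')<\mathfrak l(w)$ (invoking the reflection decomposition as in \cite{moeglin_waldspurger_1995}) and inducts. Your route is geometric: you identify $W_M^1$ with the stabiliser of $C_M^+$ and then read the factorisation off from the simple transitivity of $W_M^0$ on the relative chambers, a fact the paper has already recorded in the preliminaries. Your approach is slightly more economical in that it uses that simple-transitivity statement directly and avoids having to justify the length-drop for a reflection $w_\alpha$ with $\alpha\in\Delta_M^0$ (which is \emph{not} in general a simple reflection of the ambient Weyl group $W$); the paper's argument, on the other hand, is closer in spirit to the usual Coxeter-theoretic manipulations and yields the decomposition $w=w_{\alpha_1}\cdots w_{\alpha_k}w_1$ with $\alpha_i\in\Delta_M^0$ explicitly along the way.
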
   
\begin{proof}
	It suffices to show 
	\[W_M=W_M^0. W_M^1. \]
	Recall that $\Phi_M^0$ is a root system with the set of simple roots denoted by $\Delta_M^0$, and $W_M$ preserves $\Phi_M^0$. Therefore for $w\in W_M$, we may consider the action of $w$ on  $\Delta_M^0$. If
	\[w.\Delta_M^0>0, \qquad i.e.\quad w.\Phi_M^0>0, \]
	then by definition, we have $w\in W_M^1$. Otherwise, there exists some $\alpha\in \Delta_M^0$ such that
	\[w.\alpha<0. \]
	By the decomposition structure of $w$ as reflections as in \cite{moeglin_waldspurger_1995}, we know that 
	\[w=w_\alpha  w'\mbox{ with }\mathfrak{l}(w)>\mathfrak{l}(w'), \]
	where $\mathfrak{l}(-)$ is the length function. It is easy to see that $w'\in W_M$. So the Lemma follows by induction on the length of $w$.	 
\end{proof}
The second observation is about the structure of those ``mysterious'' intertwining operators $A(w,ww_1)$ with $w\in W_M^0$ and $w_1\in W_M^1$.

As pointed out previously, we define $S$ to be the set of those relative positive coroots $\alpha^\vee$ such that the co-rank one parabolic induction $Ind^{M_\alpha}_{P\cap M_\alpha}(\rho)$ associated to $\alpha\in \Phi^0_M$ is reducible. Notice that $W_M\neq W_M^0$in general, thus the following key Lemma is needed to claim a similar result as Theorem \ref{rps} for regular generalized principal series. 
\begin{lem}\label{key1}Keep the notions as above.
	For any $w\in W_M^0$ and any $w_1\in W_M^1$, we have that
	\[A(w,ww_1):~Ind^G_P(\rho^w)\longrightarrow Ind^G_P(\rho^{ww_1})\mbox{ is an isomorphism.} \]
\end{lem}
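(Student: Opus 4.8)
The plan is to realise $A(w,ww_1)$ concretely and to reduce its invertibility to that of a standard intertwining operator between two parabolic subgroups with the same Levi $M$ which ``crosses'' only relative roots lying outside $\Phi_M^0$. Fix representatives $\widetilde w,\widetilde w_1\in N_G(M)$ of $w$ and $w_1$, and set $\sigma:=\rho^w$; this is again a regular supercuspidal representation of $M$, since $W_\sigma=w^{-1}W_\rho w=\{1\}$. Left translation by $\widetilde w_1$ gives a $G$-isomorphism from $Ind^G_P(\sigma)$ onto $Ind^G_{P'}(\sigma^{w_1})$, where $\sigma^{w_1}=\rho^{ww_1}$ and $P'=\widetilde w_1P\widetilde w_1^{-1}=MN'$ is the parabolic with the same Levi $M$ and unipotent radical $N'=\widetilde w_1N\widetilde w_1^{-1}$. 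By Frobenius reciprocity, the geometric lemma and the regularity of $\rho$, the space $Hom_G\big(Ind^G_P(\rho^w),Ind^G_P(\rho^{ww_1})\big)$ is one-dimensional; hence $A(w,ww_1)$ is an isomorphism as soon as \emph{some} nonzero element of it is, and the composition of the translation above with the standard intertwining operator $J_{P|P'}(\sigma^{w_1})\colon Ind^G_{P'}(\sigma^{w_1})\to Ind^G_P(\sigma^{w_1})$ is such an element once we know $J_{P|P'}(\sigma^{w_1})$ is an isomorphism. So everything reduces to proving the latter.

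The hypothesis $w_1\in W_M^1$ enters through a purely combinatorial observation. Since $w_1.(\Phi_M^0)^+>0$ and $W_M^1$ is a group, the same holds for $w_1^{-1}$; therefore $w_1$ stabilises the relative dominant chamber $C_M^+\subset{}^{0}\mathfrak{a}_M^\star$, so that $P$ and $P'$ determine the same chamber of ${}^{0}\mathfrak{a}_M^\star$. Now the reduced relative roots separating $P$ from $P'$ are precisely those $\gamma\in\Phi_M^+$ with $w_1^{-1}.\gamma<0$, and I claim each of them lies in $\Phi_M\setminus\Phi_M^0$: if we had $\gamma\in(\Phi_M^0)^+$, then $w_1^{-1}.\gamma$ would again be a positive root of the sub-root-system $\Phi_M^0$ (recall $W_M$, hence $w_1^{-1}$, preserves $\Phi_M^0$), in particular positive, contradicting $w_1^{-1}.\gamma<0$.

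Next I would factor $J_{P|P'}(\sigma^{w_1})$ along a minimal chain $P=P_0,P_1,\dots,P_k=P'$ of parabolic subgroups of $G$, all with Levi component $M$, with consecutive members adjacent along a single wall $Ker(\gamma_i^\vee)$, where $\gamma_1,\dots,\gamma_k$ run over the reduced relative roots separating $P$ from $P'$. Then $J_{P|P'}(\sigma^{w_1})$ is the composition of the elementary operators $J_{P_{i-1}|P_i}$, and each $J_{P_{i-1}|P_i}$ is, by induction in stages, the parabolic induction to $G$ of the co-rank one intertwining operator attached to $\gamma_i$ inside $M_{\gamma_i}$. By the previous paragraph every $\gamma_i$ lies in $\Phi_M\setminus\Phi_M^0$, so by Silberger's co-rank one analysis \cite{silberger1980special} the two co-rank one induced representations of $M_{\gamma_i}$ occurring here are irreducible for all unramified twists; consequently that co-rank one operator is holomorphic at the relevant point and is a nonzero map between irreducible representations, hence an isomorphism. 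Since parabolic induction is exact, each $J_{P_{i-1}|P_i}$ is an isomorphism, and therefore so are $J_{P|P'}(\sigma^{w_1})$ and, by the first paragraph, $A(w,ww_1)$.

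The step I expect to be the main obstacle is the factorisation together with the non-vanishing of each co-rank one factor at $\sigma^{w_1}$: this is exactly where co-rank one reducibility theory (Silberger) --- or, in a variant of the argument, the Harish-Chandra/Langlands--Shahidi product formula for the Plancherel $\mu$-function, whose factors indexed by roots outside $\Phi_M^0$ are units --- genuinely replaces the Coxeter combinatorics available in the principal series case. Note that trying to mimic Rodier's reduction to rank-one operators \emph{inside} $W_M$ fails precisely for the $W_M^1$-part, because $w_1$ is in general not a product of the reflections $w_\alpha$, $\alpha\in\Phi_M^0$; passing instead through a chain of parabolic subgroups with common Levi $M$ --- rather than through a gallery in the non-Coxeter group $W_M$ --- is what makes the argument work.
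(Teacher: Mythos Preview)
Your argument is correct and follows the same strategy as the paper: realise $A(w,ww_1)$ as translation by $w_1$ composed with a standard intertwining operator $J_{P|P'}$ between two parabolics with common Levi $M$, observe that every relative root separating $P$ from $P'$ lies outside $\Phi_M^0$ (since $w_1\in W_M^1$ fixes the positive system of $\Phi_M^0$), and then invoke Silberger's co-rank one theory to conclude that $J_{P|P'}$ is an isomorphism. The only cosmetic difference is that the paper, rather than factoring $J_{P|P'}$ along a chain of adjacent parabolics, applies Waldspurger's product formula $J_{P|P'}\circ J_{P'|P}=\prod_\alpha j_\alpha(\rho)\cdot J_{P|P}$ directly and checks that each $j_\alpha(\rho)$ with $\alpha$ outside $S\cup -S$ is a nonzero finite scalar --- precisely the variant you yourself flag in your closing paragraph.
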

\begin{proof}
	it reduces to show 
	\[Ind^G_P(\rho)\simeq Ind^G_P(\rho^{w_1}), \]
	which follows from the associativity property of intertwining operators (please refer to \cite[IV.3.(4)]{waldspurger2003formule} for the notions). To be precise, up to non-zero scalar, the non-trivial intertwining operator
	\[A: ~Ind_P^G(\rho)\longrightarrow Ind_P^G(\rho^{w_1}) \]
	is equal to 
	\[J_{P|w_1^{-1}Pw_1}(\rho^{w_1})\circ\lambda(w_1):~Ind_P^G(\rho)\longrightarrow Ind^G_{w_1^{-1}Pw_1}(\rho^{w_1})\longrightarrow Ind_P^G(\rho^{w_1}).  \]
	By \cite[IV.3.(4)]{waldspurger2003formule}, we have
	\[J_{P|w_1^{-1}Pw_1}(\rho^{w_1})J_{w_1^{-1}Pw_1|P}(\rho)=\prod j_\alpha(\rho) J_{P|P}(\rho), \]
	where $\alpha$ runs over $\Phi_M(P)\bigcap \Phi_M(\overline{w_1^{-1}Pw_1})$ with $\overline{w_1^{-1}Pw_1}$ the opposite parabolic subgroup of $w_1^{-1}Pw_1$. Notice that 
	\[w_1.C^+_M=C^+_M, \]
	so we have
	\[\Phi_M(P)\bigcap \Phi_M(\overline{w_1^{-1}Pw_1})\bigcap (S\bigcup -S)=\emptyset .\]
	Thus, in view of \cite[Corollary 1.8]{silberger1980special},
	\[\prod j_\alpha(\rho) J_{P|P}(\rho)=\prod j_\alpha(\rho)\neq 0,~\infty. \]
	Whence $A$ is an isomorphism. 	
\end{proof}
Combining the previous analysis and our key Lemma \ref{key1}, we could now claim our Rodier type structure theorem for the general case as follows:
\begin{thm}(Rodier Type Structure Theorem)\label{mthm}
	Keep the notation as previous. The constituents $\pi_\Gamma$ of the regular generalized principal series $Ind^G_P(\rho)$ are parameterized by the connected components $\Gamma$ of $$^{0}\mathfrak{a}_M^\star-\bigcup_{\alpha^\vee\in S}Ker(\alpha^\vee)$$
	satisfying the following property:
	
	the Jacquet module $r_P(\pi_\Gamma)$ of $\pi_\Gamma$ with respect to $P$ is equal to 
	\[\bigoplus_{wC^+_M\subset \Gamma}\rho^w. \]	
\end{thm}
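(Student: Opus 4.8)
The plan is to transport Rodier's argument — reviewed above for the split principal series $Ind^G_B(\chi)$ — to $Ind^G_P(\rho)$ by working inside the Coxeter group $W_M^0$ with its (possibly reducible) root system $\Phi_M^0$, and to bridge between $W_M^0$ and the non-Coxeter group $W_M$ by means of Lemma~\ref{key2} and Lemma~\ref{key1}.

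First I would set up the abstract skeleton exactly as in the split case. Since $\rho$ is supercuspidal and regular, the Bernstein--Zelevinsky geometrical lemma gives, for every $w\in W_M$, a multiplicity-free Jacquet module
\[ r_P\big(Ind^G_P(\rho^w)\big)=\bigoplus_{u\in W_M}\rho^{wu}; \]
hence by Frobenius reciprocity (resp. second adjointness) $Hom_G\big(Ind^G_P(\rho^w),Ind^G_P(\rho^{w'})\big)\simeq\mathbb{C}$ for all $w,w'\in W_M$, so the intertwining operator $A(w,w')$ is well defined up to scalar, and by exactness of the Jacquet functor every $\pi\in JH(Ind^G_P(\rho))$ occurs with multiplicity one and is determined by $r_P(\pi)$. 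As above, $JH(Ind^G_P(\rho))$ is then in bijection with the set of orbits of $\{\rho^w:w\in W_M\}$ under the relation $\rho^w\sim\rho^{w'}$ iff $A(w,w')$ is an isomorphism, equivalently iff $Ind^G_P(\rho^w)\simeq Ind^G_P(\rho^{w'})$.

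Next I would reduce $\sim$ from $W_M$ to $W_M^0$ and rerun Rodier's argument there. By Lemma~\ref{key2} each $w\in W_M$ is uniquely $w=w_0w_1$ with $w_0\in W_M^0$, $w_1\in W_M^1$; Lemma~\ref{key1} yields $Ind^G_P(\rho^{w_0})\simeq Ind^G_P(\rho^{w})$, and since $w_1.C_M^+=C_M^+$ (from the proof of Lemma~\ref{key1}) also $wC_M^+=w_0C_M^+$, so $\sim$ is constant on each fibre $\{w_0w_1:w_1\in W_M^1\}$ and descends to the set of Weyl chambers $\{w_0C_M^+:w_0\in W_M^0\}$ of $\Phi_M^0$. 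It then remains to show, for $w_0,w_0'\in W_M^0$, that $Ind^G_P(\rho^{w_0})\simeq Ind^G_P(\rho^{w_0'})$ exactly when $w_0C_M^+$ and $w_0'C_M^+$ lie in the same component $\Gamma$ (in the sense of the statement). For a simple reflection $w_\alpha$ with $\alpha\in\Delta_M^0$, induction in stages through the co-rank-one parabolic $P_\alpha=M_\alpha N_\alpha$ identifies $A(w_0,w_0w_\alpha)$ with $Ind^G_{P_\alpha}$ of the rank-one operator attached to $Ind^{M_\alpha}_{P\cap M_\alpha}(\rho^{w_0})$ — here $N_{M_\alpha}(M)/M=\{1,w_\alpha\}$ precisely because $\alpha\in\Phi_M^0$ — which is invertible iff that rank-one induction is irreducible iff $(w_0.\alpha)^\vee\notin S\bigcup -S$; the geometrical-lemma formulas $(\star\star)$ and $(SB)$ for $r_P(Ker(A))$, $r_P(Im(A))$ carry over verbatim, and co-rank-one induction is automatically irreducible for relative roots outside $\Phi_M^0$. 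For general $w_0^{-1}w_0'$ one takes a reduced word in the Coxeter group $W_M^0$, establishes the factorization $(KO)$ of $A(w_0,w_0')$ as in the split case (nonvanishing of the composite checked on Jacquet modules via $(RD)$ and $(SB)$, now relative to $\Phi_M^0$), and concludes that $A(w_0,w_0')$ is an isomorphism iff the minimal gallery from $w_0C_M^+$ to $w_0'C_M^+$ crosses no wall $Ker(\beta^\vee)$ with $\beta^\vee\in S$, i.e. iff the two chambers lie in the same $\Gamma$. The Jacquet module formula then follows from the iterated kernel/image computation exactly as in the split case, giving $r_P(\pi_\Gamma)=\bigoplus_{w\in W_M,\ wC_M^+\subset\Gamma}\rho^w$, each Weyl chamber of $\Phi_M^0$ inside $\Gamma$ being counted $|W_M^1|$ times, consistently with $r_P(Ind^G_P(\rho))=\bigoplus_{w\in W_M}\rho^w$.

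The main obstacle is precisely the one already resolved by Lemmas~\ref{key2} and~\ref{key1}: the Coxeter combinatorics (reduced words, minimal galleries) driving $(KO)$ is unavailable for $W_M$ itself, while the descent from $W_M$-orbits to $\Phi_M^0$-chambers requires the automatic invertibility of the ``$W_M^1$-part'' of the intertwining operators, which rests on the co-rank-one Plancherel analysis (cf. the proof of Lemma~\ref{key1} and \cite{silberger1980special}) and on $w_1.C_M^+=C_M^+$. Once those two lemmas are in hand, what is left is bookkeeping: checking that the reducibility set $S$, the double-coset description $P_\alpha\backslash G/P$, and the geometrical-lemma computations behave in the relative setting just as their absolute counterparts do above — which they do because $\Phi_M^0$ is a genuine root system with Weyl group $W_M^0$.
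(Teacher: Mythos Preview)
Your proposal is correct and follows essentially the same approach as the paper: reduce from $W_M$ to the Coxeter group $W_M^0$ via Lemmas~\ref{key2} and~\ref{key1} (together with $w_1.C_M^+=C_M^+$), then rerun Rodier's split-case argument verbatim inside $\Phi_M^0$ using induction in stages, the geometrical lemma, and the factorization $(KO)$ of intertwining operators along reduced words. Your observation that each $\Phi_M^0$-chamber in $\Gamma$ is hit $|W_M^1|$ times, so that the Jacquet-module sum over $w\in W_M$ with $wC_M^+\subset\Gamma$ is consistent, is a useful bookkeeping remark that the paper leaves implicit.
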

A direct corollary of Rodier type structure theorm, i.e. Theorem \ref{mthm} is the well-known Harish-Chandra--Silberger's irreducibility criterion for regular generalized principal series $Ind^G_P(\rho)$ as follows:
\begin{thm}(see \cite[Theorem 5.4.3.7]{silberger2015introduction})\label{irredcrit}
	If $\rho$ is a regular supercuspidal representation of the Levi subgroup $M$ of $P=MN$ in $G$, then the following are equivalent
	\begin{enumerate}[(i)]
		\item $Ind^G_P(\rho)$ is irreducible.
		\item $S$ is an empty set, i.e. no co-rank one reducibility.
	\end{enumerate}
\end{thm}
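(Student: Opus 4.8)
The plan is to deduce both implications directly from the Rodier type structure theorem, Theorem \ref{mthm}, which identifies the set $JH(Ind^G_P(\rho))$ with the set of connected components of the hyperplane-arrangement complement
$\mathcal{C}:={}^{0}\mathfrak{a}_M^\star-\bigcup_{\alpha^\vee\in S}Ker(\alpha^\vee)$.
In particular $Ind^G_P(\rho)$ is irreducible if and only if $\mathcal{C}$ is connected, so the whole statement reduces to a trivial piece of point-set and linear-algebra bookkeeping about $\mathcal{C}$; all of the representation-theoretic substance is already packaged inside Theorem \ref{mthm}.

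For the implication (ii)$\Rightarrow$(i): if $S=\emptyset$ then $\mathcal{C}={}^{0}\mathfrak{a}_M^\star$ is a real vector space, hence (being convex) connected, so by Theorem \ref{mthm} there is a single constituent $\pi_\Gamma$ attached to $\Gamma={}^{0}\mathfrak{a}_M^\star$, with $r_P(\pi_\Gamma)=\bigoplus_{wC_M^+\subset{}^{0}\mathfrak{a}_M^\star}\rho^w=r_P(Ind^G_P(\rho))$; thus $Ind^G_P(\rho)=\pi_\Gamma$ is irreducible.

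For (i)$\Rightarrow$(ii) I would argue by contraposition. Suppose $S\neq\emptyset$ and fix $\alpha^\vee\in S$, so $\alpha\in\Phi_M^0\subset{}^{0}\mathfrak{a}_M^\star$. Since $\left<\alpha,\alpha^\vee\right>=2\neq 0$, the linear functional $\alpha^\vee$ does not vanish identically on ${}^{0}\mathfrak{a}_M^\star$, so $Ker(\alpha^\vee)\cap{}^{0}\mathfrak{a}_M^\star$ is a genuine hyperplane and ${}^{0}\mathfrak{a}_M^\star$ is the disjoint union of $Ker(\alpha^\vee)$ with the two open half-spaces $H^\pm:=\{x:\pm\alpha^\vee(x)>0\}$. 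Every connected subset of $\mathcal{C}$ avoids $Ker(\alpha^\vee)$ and hence lies entirely in $H^+$ or in $H^-$; moreover each of $H^\pm$ meets $\mathcal{C}$, because a finite union of proper affine subspaces cannot cover the nonempty open set $H^\pm$. Therefore $\mathcal{C}$ has at least two connected components, so by Theorem \ref{mthm} the representation $Ind^G_P(\rho)$ has at least two Jordan--H\"older constituents and is reducible.

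There is essentially no hard step here: the only points needing a word of justification are that $\alpha^\vee$ restricts to a nonzero functional on ${}^{0}\mathfrak{a}_M^\star$ (immediate from $\left<\alpha,\alpha^\vee\right>=2$ with $\alpha\in{}^{0}\mathfrak{a}_M^\star$) and that both half-spaces $H^\pm$ actually contain points of $\mathcal{C}$ (a Baire-category or elementary measure argument: finitely many hyperplanes do not cover an open set). Note also that when $\Phi_M^0=\emptyset$ one has ${}^{0}\mathfrak{a}_M^\star=\{0\}$ and $S=\emptyset$, so that degenerate case is covered by the first implication with nothing to check.
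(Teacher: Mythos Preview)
Your proof is correct and follows exactly the approach the paper intends: the paper states this result as ``a direct corollary of Rodier type structure theorem, i.e.\ Theorem \ref{mthm}'' without writing out any details, and you have simply supplied the elementary topological bookkeeping (connectedness of the complement of the hyperplane arrangement) that makes that corollary explicit. There is nothing to add or correct.
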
 
Well, to sum up, we would like to emphasis that the previous argument works in a broad sense if the following two analogous ingredients exist in general
\begin{enumerate}[(i)]
	\item Bernstein--Zelevinsky geometrical lemma (cf. \cite{bernstein1977induced,casselman1995introduction}).
	\item Harish-Chandra Plancherel formula theory, especially intertwining operator theory (cf. \cite{waldspurger2003formule}).
\end{enumerate}
A direct example is that the finite central covering group $\widetilde{G}$ of $G$ enjoys those properties listed as above (cf. \cite{BJ,luo2017R}). Thus we know that
\begin{thm}\label{covering}
	Rodier type structure theorem, i.e. Theorem \ref{mthm} and Theorem \ref{irredcrit} hold for finite central covering group $\widetilde{G}$.
\end{thm}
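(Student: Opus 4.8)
The plan is to observe that the entire chain of arguments culminating in Theorem \ref{mthm} and Theorem \ref{irredcrit} uses nothing about $G$ beyond the two structural ingredients isolated at the end of the previous discussion, and that both of them persist for a finite central covering $p\colon\widetilde G\to G$. First I would set up the standard dictionary for coverings: the preimage $\widetilde M=p^{-1}(M)$ of a Levi $M$ is a Levi subgroup of $\widetilde G$, $\widetilde P=p^{-1}(P)$ is a parabolic with the same unipotent radical $N$ (which splits canonically in $\widetilde G$), normalized parabolic induction $Ind^{\widetilde G}_{\widetilde P}$ and the normalized Jacquet functor $r_{\widetilde P}$ are defined on genuine smooth representations exactly as in Section 2.3, and $\delta_{\widetilde P}=\delta_P\circ p$. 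Since the covering is \emph{central}, conjugation identifies $N_{\widetilde G}(\widetilde M)/\widetilde M$ canonically with $W_M=N_G(M)/M$; hence $\Phi_M$, $\Phi_M^0$, the reflections $w_\alpha$, the subgroups $W_M^0$ and $W_M^1$, the spaces $\mathfrak a_M^\star$, $^{0}\mathfrak a_M^\star$ and the chamber $C_M^+$ are literally the same objects as for $G$. In particular Lemma \ref{key2}, being a purely combinatorial statement about the root system $\Phi_M^0$ and the finite group $W_M$, holds verbatim for $\widetilde G$.

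Next I would transport the analytic input. By \cite{BJ,luo2017R} the Harish-Chandra Plancherel machinery carries over to genuine representations of finite central coverings with its formal properties intact: the rank-one intertwining operators $J_{Q'|Q}(\sigma)$ and their uniqueness up to scalar, the Plancherel functions $\mu_\alpha(\cdot)$ and scalar factors $j_\alpha(\cdot)$, the product formula $J_{P|w_1^{-1}Pw_1}(\rho^{w_1})J_{w_1^{-1}Pw_1|P}(\rho)=\prod_\alpha j_\alpha(\rho)\,J_{P|P}(\rho)$ of \cite[IV.3.(4)]{waldspurger2003formule}, together with Silberger's analysis of co-rank one reducibility, including \cite[Corollary 1.8]{silberger1980special} and the irreducibility of $Ind^{\widetilde M_\alpha}_{\widetilde P\cap\widetilde M_\alpha}(\widetilde\rho)$ for $\alpha\notin\Phi_M^0$. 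Thus for a genuine regular supercuspidal $\widetilde\rho$ (i.e. $W_{\widetilde\rho}=\{1\}$ in $W_M$) the set $S$ is defined exactly as before, and the proof of Lemma \ref{key1} — which only manipulates these operators and uses $w_1.C_M^+=C_M^+$ — goes through unchanged. Likewise the Bernstein--Zelevinsky geometrical lemma for $\widetilde G$ supplies exactness of $r_{\widetilde P}$, Frobenius reciprocity, the multiplicity-at-most-one property, and the Jacquet-module computations $(\star\star)$, $(SB)$, $(KO)$, now with $W_T$ and $B$ replaced by the Coxeter group $W_M^0$ (acting simply transitively on the relative Weyl chambers in $^{0}\mathfrak a_M^\star$) and by $P$.

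With both ingredients in hand, the argument of Section 3 applies line by line: determining $JH(Ind^{\widetilde G}_{\widetilde P}(\widetilde\rho))$ reduces to computing the orbits of $\{\widetilde\rho^{\,w}:w\in W_M\}$ under the isomorphism relation on the operators $A(w,w')$; Lemma \ref{key1} collapses each coset $W_M^0w_1$ to a single point, leaving only $W_M^0$ and its reflection geometry on $^{0}\mathfrak a_M^\star$; and the $(KO)$-factorization together with $(SB)$ matches the orbits with the connected components $\Gamma$ of $^{0}\mathfrak a_M^\star-\bigcup_{\alpha^\vee\in S}Ker(\alpha^\vee)$, giving $r_{\widetilde P}(\pi_\Gamma)=\bigoplus_{wC_M^+\subset\Gamma}\widetilde\rho^{\,w}$. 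This is Theorem \ref{mthm} for $\widetilde G$, and Theorem \ref{irredcrit} for $\widetilde G$ follows at once, since $Ind^{\widetilde G}_{\widetilde P}(\widetilde\rho)$ is irreducible iff that complement is connected iff $S=\emptyset$. The one step I expect to demand real care — and hence the main obstacle — is the verification in the second paragraph: that the intertwining-operator calculus of \cite{waldspurger2003formule} (uniqueness up to scalar of $A(w,w')$, compatibility with induction in stages, and the product formula with the correct $j_\alpha$'s and finite nonvanishing constants) holds in the covering setting; once that bookkeeping is granted, no new idea is needed and the rest is a transcription of Section 3.
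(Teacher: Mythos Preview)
Your proposal is correct and follows exactly the paper's own approach: the paper simply observes that the entire argument of Section~3 rests on the two ingredients (i) the Bernstein--Zelevinsky geometrical lemma and (ii) the Harish-Chandra Plancherel/intertwining-operator machinery, and that both are available for finite central coverings by \cite{BJ,luo2017R}, so Theorems~\ref{mthm} and~\ref{irredcrit} transfer verbatim. If anything, you have supplied more of the bookkeeping (the dictionary $\widetilde P=p^{-1}(P)$, $N_{\widetilde G}(\widetilde M)/\widetilde M\simeq W_M$, the purely combinatorial nature of Lemma~\ref{key2}, etc.) than the paper does.
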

Another direct corollary of the above structure theorems, i.e. Theorem \ref{mthm} and Theorem \ref{covering}, is as follows:
\begin{cor}\label{univirred}
	Keep the notions as before. If all co-rank one reducibility conditions lie in a Levi subgroup $L$ of a parabolic subgroup $Q=LV$ in $G$, then we have
	\[Ind^G_Q(\sigma)\mbox{ is always irreducible for any }\sigma\in JH(Ind^L_{L\cap P}(\rho)). \]
\end{cor}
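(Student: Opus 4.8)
The plan is to reduce the irreducibility of $Ind^G_Q(\sigma)$ to a statement about co-rank one reducibility inside $G$ and then invoke the Rodier type structure theorem (Theorem \ref{mthm}), which applies verbatim to the finite central covering setting by Theorem \ref{covering}. First I would fix $\sigma\in JH(Ind^L_{L\cap P}(\rho))$ and observe, via the induction-by-stages property of parabolic induction, that $Ind^G_Q(\sigma)$ is a subquotient of $Ind^G_P(\rho)=Ind^G_Q\circ Ind^L_{L\cap P}(\rho)$, so that every constituent of $Ind^G_Q(\sigma)$ is a constituent $\pi_\Gamma$ of $Ind^G_P(\rho)$ in the sense of Theorem \ref{mthm}. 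By that theorem the number of distinct constituents of $Ind^G_Q(\sigma)$ equals the number of connected components $\Gamma$ of $\,^{0}\mathfrak{a}_M^\star-\bigcup_{\alpha^\vee\in S}Ker(\alpha^\vee)$ whose associated Jacquet module $\bigoplus_{wC^+_M\subset\Gamma}\rho^w$ is ``visible'' from $\sigma$; concretely, $Ind^G_Q(\sigma)$ is irreducible precisely when all the chambers $wC^+_M$ that contribute to $r_{L\cap P}(\sigma)$ lie in a single such component $\Gamma$.

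Next I would use the hypothesis that all co-rank one reducibility points lie in $L$: this says exactly that every $\alpha\in\Phi^0_M$ with $\alpha^\vee\in S$ is a root of $L$, i.e. $S\subset\Phi_M^L$, the relative root system of $M$ inside $L$. Consequently each wall $Ker(\alpha^\vee)$ with $\alpha^\vee\in S$ is one of the walls cut out by $L$, so the component $\Gamma$ containing a given chamber $wC^+_M$ is a union of $W^L_M$-translates (more precisely, translates by the ``small'' relative Weyl group of $M$ in $L$) of that chamber — in particular $\Gamma\supset W^{L,0}_M\cdot wC^+_M$. On the other hand, the chambers contributing to $r_{L\cap P}(\sigma)$ for a fixed constituent $\sigma$ of $Ind^L_{L\cap P}(\rho)$ are, again by Theorem \ref{mthm} applied inside $L$, exactly the $wC^+_M$ lying in one connected component of $\,^{0}\mathfrak{a}_M^\star$ minus the $L$-walls indexed by $S$. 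Since the $L$-walls indexed by $S$ are the only walls of $G$ indexed by $S$, these two partitions of the chamber set coincide, so the chambers attached to $\sigma$ fill exactly one component $\Gamma$; hence $Ind^G_Q(\sigma)$ is irreducible.

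The main obstacle I anticipate is the bookkeeping in matching chambers: one must be careful that ``connected component after deleting the $S$-walls'' computed in $\,^{0}\mathfrak{a}_M^\star$ inside $G$ restricts correctly to the analogous object inside $L$, because a priori $\,^{0}\mathfrak{a}_M^\star$ and its analogue for $M\subset L$ need not agree and the relative Weyl groups $W_M$, $W^L_M$ differ by the factor $W_M^1$. Here Lemma \ref{key1} is the crucial lubricant: it guarantees that the extra $W_M^1$-directions never separate constituents, so passing between the $G$-picture and the $L$-picture does not change which chambers are glued together. Once this compatibility is in place, the corollary follows formally. I would also remark that the statement for covering groups $\widetilde G$ is identical, since Theorem \ref{covering} provides the structure theorem in that generality and the argument above uses nothing beyond it.
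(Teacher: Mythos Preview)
Your proposal is correct and follows the same route the paper intends: the paper states the corollary as a ``direct corollary of the above structure theorems, i.e.\ Theorem \ref{mthm} and Theorem \ref{covering}'' without further argument, and your write-up simply unpacks that claim by comparing the $S$-wall decomposition for $G$ with the one for $L$ (using $S_L=S$) and invoking induction in stages. Your anticipated bookkeeping obstacle is real but mild---a quicker variant is the pure length count $\#JH(Ind^G_P(\rho))=\#\{\Gamma\text{'s for }G\}=\#\{\Gamma\text{'s for }L\}=\#JH(Ind^L_{L\cap P}(\rho))$, which together with exactness of $Ind^G_Q$ and multiplicity one forces each $Ind^G_Q(\sigma)$ to be irreducible; this bypasses the need to match individual chambers or worry about the discrepancy between $W_M$ and $W_M^L$.
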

Indeed, such a natural universal irreducibility structure is a special case of a general phenomenon \cite[Theorem 2.1]{luo2018C}.
	
\section{linearly independence of co-rank one reducibility conditions}
Recall that $P=MN$ is a standard parabolic subgroup of $G$, $\rho$ is a regular supercuspidal representation of $M$, $\Phi_M$ (resp. $\Phi_M^\vee$) is the set of relative reduced roots (resp. coroots) determined by $P$ and $S$ is the set of those positive relative coroots in $\Phi_M^\vee$ such that the associated co-rank one inductions are reducible. Denote by $\rho_0$ the unitary part of $\rho$, and by $\omega_\rho$ the real unramified part of the central character of $\rho$, i.e. $\omega_\rho\in \mathfrak{a}_M^*$.

As an essential input for the determination of square-integrable/tempered constituents in $JH(Ind^G_P(\rho))$, we need to prove the following claim (cf. \cite[Proposition 3]{rodier1981decomposition} for principal series of split groups) which says that  
\begin{thm}\label{linearindep}
	Keep the notions as before. The set $S$ is linearly independent.
\end{thm}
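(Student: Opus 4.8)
The plan has two parts: first, prove that $S$ in fact consists of coroots of a genuine root \emph{subsystem} of $\Phi_M^0$ — so that the pertinent reflection group becomes a Coxeter group — and then feed this into the combinatorial argument of \cite[Proposition~3]{rodier1981decomposition}, which is tailored to exactly that situation.

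\emph{Part 1 (locating $S$; the main obstacle).} Write $\rho=\rho_0\otimes\omega_\rho$ with $\rho_0$ unitary and $\omega_\rho\in\mathfrak a_M^\star$ real. For $\alpha\in\Phi_M^0$ the co-rank one Plancherel measure $\mu_\alpha$ depends only on $\rho_0$ and on $\langle\omega_\rho,\alpha^\vee\rangle$, has at most one pole with $\mathrm{Re}>0$ (Silberger \cite{silberger1980special}), and — by Bernstein--Zelevinsky when $M_\alpha$ is of $\mathrm{GL}$-type and by the Langlands--Shahidi analysis of the relevant $L$-functions otherwise — can have a pole \emph{only if} $\rho_0^{w_\alpha}\simeq\rho_0$. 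Thus $\alpha^\vee\in S$ forces $\rho_0^{w_\alpha}\simeq\rho_0$; combined with regularity of $\rho$, which excludes the ``unitary'' reducibility point $\langle\omega_\rho,\alpha^\vee\rangle=0$ (that would give $\rho^{w_\alpha}\simeq\rho$), this places the pole at $\langle\omega_\rho,\alpha^\vee\rangle=\pm s_\alpha$ with $s_\alpha>0$. Hence
\[S\subseteq\bigl(\Phi_{\rho_0}^{+}\bigr)^{\vee},\qquad \Phi_{\rho_0}:=\{\gamma\in\Phi_M^0:\rho_0^{w_\gamma}\simeq\rho_0\},\]
and $\Phi_{\rho_0}$ is (by a standard fact of Harish-Chandra's theory) a root subsystem of $\Phi_M^0$; moreover regularity of $\rho$ forces $\langle\omega_\rho,\gamma^\vee\rangle\ne0$ for \emph{every} $\gamma\in\Phi_{\rho_0}$, i.e. $\omega_\rho$ is regular for the Coxeter group $W(\Phi_{\rho_0})$. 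Finally the pole location $s_\gamma$ is constant on each $W_M$-orbit of $\Phi_{\rho_0}$ (the ``uniform location'' of the co-rank one pole, via Silberger), so it depends only on the irreducible component and root length of $\gamma$, and ``$\alpha^\vee\in S$'' becomes the prescribed equation $\langle\omega_\rho,\alpha^\vee\rangle=\pm s_\alpha$ with $s_\alpha$ a fixed half-integer — in Rodier's split principal series one always has $s_\alpha=1$. This passage from $W_M$ (not Coxeter) to $W(\Phi_{\rho_0})$ (Coxeter) is the substantive new input and is exactly where Rodier's proof of Proposition~3 does not transcribe: over a split torus every $M_\alpha$ is of $\mathrm{GL}_2$-type with $s_\alpha=1$ and no hypothesis is needed, whereas here one genuinely uses the Langlands--Shahidi description of the $\mu_\alpha$ and Silberger's uniqueness of their poles; the occurrence of $s_\alpha=\tfrac12$, never seen for split principal series, is a secondary wrinkle.

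\emph{Part 2 (Rodier's combinatorial argument).} We are now in the abstract situation of \cite[Proposition~3]{rodier1981decomposition}: a genuine root system $\Phi_{\rho_0}$, a $W(\Phi_{\rho_0})$-regular parameter $\omega_\rho$, and $S$ the set of positive coroots realizing the distinguished values $\pm s_\bullet$. If $S$ were linearly dependent, choose a minimal dependent subfamily — a ``coroot circuit'' $\sum_l c_l\gamma_l^\vee=0$ with all proper subfamilies independent. Pairing with $\omega_\rho$ gives only the scalar relation $\sum_l c_l(\pm s_{\gamma_l})=0$, which is harmless by itself; the contradiction is obtained by combining it with (i) the fact that each $\langle\omega_\rho,\gamma_l^\vee\rangle$ is the \emph{prescribed} half-integer $\pm s_{\gamma_l}$ and (ii) regularity, $\langle\omega_\rho,\delta^\vee\rangle\ne0$ for \emph{all} $\delta\in\Phi_{\rho_0}$. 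For instance, in type $A$ a coroot circuit is a graph cycle $i_1{-}i_2{-}\cdots{-}i_k{-}i_1$: (i) says the coordinates $x_i$ of $\omega_\rho$ satisfy $|x_{i_l}-x_{i_{l+1}}|=s$ cyclically and (ii) says they are pairwise distinct, so the associated $\pm s$-step walk on the line visits $k$ distinct points in $k$ steps, hence must be monotone and cannot close up unless $k=2$ — impossible, as $S$ contains only positive coroots. The remaining types ($B/C$, $D$, $F_4$, $G_2$, $E_{6,7,8}$ and products of these) are disposed of by the same finite inspection of their coroot circuits, each ending in such a ``monotone walk'' obstruction or in a forbidden relation $\langle\omega_\rho,\delta^\vee\rangle=0$. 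Therefore no coroot circuit exists, i.e. $S$ is linearly independent.
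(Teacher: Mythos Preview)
Your approach is essentially the paper's: reduce to a genuine root subsystem on which $\omega_\rho$ is regular, use the orbit-invariance of the pole locations (your ``uniform location'' is the paper's observation $(UF)$), and then argue type-by-type. The paper works inside the subsystem $\Phi_S$ generated by $S$ rather than your $\Phi_{\rho_0}$, but since $\Phi_S\subset\Phi_{\rho_0}$ this is immaterial.

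Two points, one cosmetic and one substantive. First, your assertion that $s_\alpha$ is ``a fixed half-integer'' is neither justified nor needed: for a general supercuspidal $\rho_0$ nothing forces the unique Silberger pole to sit at a half-integer, and your Part~2 never actually uses half-integrality---only that the value is constant on each length-class within an irreducible component. Drop the phrase.

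Second, your Part~2 is too thin outside type~$A$. The ``monotone walk'' picture is exactly right when all coroots have a single length (so a single prescribed value $s_0$), and indeed in that case the paper observes directly that $S^+=\{\alpha^\vee:\langle\omega_\rho,\alpha^\vee\rangle=s_0\}$ is the set of coroots realizing the \emph{minimal} positive value and hence is a base. But once two root lengths are present, a ``coroot circuit'' can mix the two prescribed values $s_0\neq t_0$, and the obstruction is no longer a single monotone-walk argument. The paper handles $B_n/C_n$ by showing that the short-root contribution to $S^+$ has cardinality at most one (any two would differ by a coroot on which $\omega_\rho$ vanishes), handles $G_2$ by a direct count, and devotes a full paragraph to $F_4$, where one must argue separately inside each of the two $D_4$-subsystems and then check that the surviving candidates are jointly independent. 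None of this is hard, but ``the same finite inspection'' does not cover it; you should either carry it out or cite a source that does.
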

Before turning to the general proof, we first serve you some observations which play a key role in what follows.

The first observation is about $W_{\rho_0}:=\big\{w\in W_M:~w.\rho_0=\rho_0 \big\}$:

Let $W_S$ be the subgroup of $W_M:=N_G(M)/M$ generated by $S$, i.e.
\[W_S:=\left<w_\alpha:~\alpha^\vee\in S \right>. \] 
Denote by $\Phi_S$ the sub-coroot system of $\Phi_M$ generated by $S$, i.e. 
\[\Phi_S:=\big\{w.\alpha^\vee:~\alpha^\vee\in S,w\in W_S  \big\}. \]
It is easy to see that $W_S$ is the Weyl group of $\Phi_S$. Notice that for any $\alpha^\vee\in S$, we have
\[w_\alpha.\rho_0=\rho_0, \]
whence $\rho_0$ is fixed by $W_S$, i.e.
\[W_S\subset W_{\rho_0}. \] 

The second observation is about the uniformity of the unique pole of the co-rank one Plancherel measure $\mu_\alpha(s,\rho_0)$ (please refer to \cite{waldspurger2003formule,silberger1980special} for the notion).

It is well-known that a non-tempered co-rank one induction $Ind^{M_\alpha}_{P\cap M_\alpha}(\rho)$ associated to $\alpha^\vee\in \Phi_M$ is reducible if and only if the co-rank one Plancherel measure $\mu_\alpha(s,\rho_0)$ has a pole at $\rho$, i.e. $(w_{\rho})_\alpha:=\omega_\rho\circ H_{\alpha^\vee}=|\cdot|^{\pm s_0}$ for a unique $s_0> 0$ (cf. \cite{silberger1980special}). Such an $s_0$ is uniquely determined by $\alpha$ and $\rho_0$. Notice that 
\[Ind^{M_\alpha}_{P\cap M_\alpha}(\rho_0)\simeq Ind^{M_\alpha^w}_{P\cap M_\alpha^w}(\rho_0^w) \]
for any $w\in W_M$, we have $\mu_\alpha(s,\rho_0)=\mu_{w.\alpha}(s,\rho_0^w)$ for any $\alpha^\vee\in \Phi_M$. In particular, for $w\in W_S$, i.e. $w.\rho_o=\rho_0$, we know that \[\mu_{\alpha}(s,\rho_0)=\mu_{w.\alpha}(s,\rho_0),\]
which is to say that
\[\mbox{ Such an $s_0$ is an invariant under a $W_S$-orbit in $\Phi_S$.}\tag{$UF$}\]
In view of those two observations, in order to prove Theorem \ref{linearindep}, it suffices to show that
\[\mbox{there exists a coroot $\beta^\vee\in \Phi_S$ such that $(\omega_\rho)_\beta=|\cdot|^0=1$}\tag{$CC$}\] 
if the set $S$ is not linearly independent under the assumption that the root system $\Phi_S$ is irreducible (cf. \cite[Proposition 3]{rodier1981decomposition}). Note that for irreducible root system $\Phi_S$, an easy calculation shows that $W_S$-orbits are determined uniquely by the lengths of roots, which is at most of two, i.e.
\[\mbox{Types $A_n, D_n$ and $E_n$: Single $W_S$-orbit; Types $B_n,C_n,G_2$ and $F_4$: Two $W_S$-orbits.} \]
Denote by $S^+$ the set of those coroot $\alpha^\vee$ in $S\bigcup -S$ such that $(\omega_\rho)_\alpha=|\cdot|^{s_0}$ with $s_0>0$.
Notice that if there are two coroots $\alpha_1^\vee$ and $\alpha_2^\vee\in S^+$ of same length such that $\left<\alpha_1,\alpha_2^\vee\right>>0$, then $w_{\alpha_2}.\alpha_1^\vee=\alpha_1^\vee-\alpha_2^\vee\in \Phi_S$, which in turn says that $(\omega_\rho)_\beta=1$. Via $(CC)$, we get a contradiction with the regular condition of $\rho$. Thus
\[\left<\alpha,\beta^\vee \right>\leq 0 \mbox{ for any }\alpha^\vee\neq \beta^\vee\in S^+ \mbox{ of same length}.\tag{$OB$}  \]
Now we could begin our proof of the claim that $S^+$ is linearly independent case-by-case in terms of irreducible root types of $\Phi_S$ as follows:
\begin{proof}
	\underline{Types $A_n, D_n$ and $E_n$}: In such a case, $\Phi_S$ has a single $W_S$-orbit. Then $(UF)$ says that there exists $s_0>0$ such that for any $\alpha^\vee\in \Phi_S$, $Ind^{M_\alpha}_{P\cap M_\alpha}(\rho)$ is reducible if and only if $(\omega_\rho)_\alpha=|\cdot|^{\pm s_0}$. In view of $(CC)$, we know that $\omega_\rho$ is a regular vector w.r.t our root system $\Phi_S$. Thus $S^+=\{\alpha^\vee\in \Phi_S:~\left<\omega_\rho,\alpha^\vee\right>=s_0\mbox{ is minimal positive} \}$. For such $S^+$, it is well-known that $S^+$ is a base of the root system $\Phi_S$ (for example see \cite[Proposition 19.7]{magerty2012}). Whence our claim holds via $(CC)$.
	
	\underline{Types $B_n, C_n, G_2$ and $F_4$}: In this case, $\Phi_S$ has two $W_S$-orbits given by the length of roots. Set those two poles of our co-rank one Plancherel measure to be at $s_0>0$ and $t_0>0$. Because of $(UF)$, we denote $S^+_{s_0}$ to be the subset of $S^+$ consisting of those coroots $\alpha^\vee\in S^+$ such that $(\omega_\rho)_\alpha=|\cdot|^{s_0}$, and denote $S^+_{t_0}$ to be the subset of $S^+$ consisting of those coroots $\beta^\vee\in S^+$ such that $(\omega_\rho)_\beta=|\cdot|^{t_0}$. Applying the same argument as in the simply-laced case, i.e. \underline{Types $A_n, D_n$ and $E_n$}, we know that $S^+_{s_0}$ and $S^+_{t_0}$ are linearly independent respectively, we also know that $S^+$ is linearly independent if $s_0=t_0$. So it is reduced to deal with the case $s_0\neq t_0$. 
	
	For types $B_n$ and $C_n$, without loss of generality, we assume that elements in $S^+_{s_0}$ are of length 2. Observe that for any two different elements $\beta^\vee_1$ and $\beta^\vee_2$ in $S^+_{t_0}$, we have $\left<\omega_\rho,\beta^\vee_1-\beta^\vee_2\right>=0$ and $c(\beta^\vee_1-\beta^\vee_2)\in \Phi_S$ for some $c=1$ or $\frac{1}{2}$, which gives a contradiction with $(CC)$. So $S^+_{t_0}$ contains at most one element. Thus $\#S^+_{s_0}\geq n-1$. After conjugating by a Weyl element in $W_S$, an easy analysis shows that $\#S^+_{s_0}=n-1$ and   
	\[S^+_{s_0}=\bigg\{e_1-e_2,e_2-e_3,\cdots, e_{n-1}-e_{n} \bigg\}. \]
	Whence our claim holds. 
	
	For Type $G_2$, we set \[\Phi_S=\pm \bigg\{\alpha,\beta,\alpha+\beta,2\alpha+\beta,3\alpha+\beta,3\alpha+2\beta \bigg\},\] and set \[\mbox{$\left<\omega_\rho,\alpha\right>=a_0>0$ and $\left<\omega_\rho,\beta\right>=b_0>0$.}\] It is easy to check that $\#S^+_{s_0}=\#S^+_{t_0}=1$. Thus our claim holds.
	
	For Type $F_4$, we set all the roots of $\Phi_S$ to be:
	\begin{align*}
	 &\mbox{24 roots by }\alpha_i:=\big(\pm 1,\pm 1,0,0\big),\mbox{ permuting coordinate positions};\\
	 &\mbox{8 roots by }\beta_j:=\big(\pm 1,0,0,0\big),\mbox{ permuting coordinate positions};~and \\
	 &\mbox{16 roots by }\gamma_k:=\left(\pm\frac{1}{2},\pm\frac{1}{2},\pm\frac{1}{2},\pm\frac{1}{2}\right).
	\end{align*}
	It is easy to see that there are at most one $\beta_j$ in $S^+$. Otherwise $\left<\omega_\rho,\beta_1-\beta_2\right>=0$ contradicts $(CC)$. We set $S^+_{s_0}$ to be the subset of those coroots in $S^+$ which are of length 2. Consider the subroot system formed by roots of length 1, we know that it is of type $D_4$, thus $S^+_{t_0}$ could only be a subset of 
	\[\bigg\{\frac{1}{2}(1,1,1,1),\frac{1}{2}(1,-1,-1,1),\frac{1}{2}(1,-1,1,-1), (-1,0,0,0) \bigg\}, \]
	after conjugating by a Weyl element. Observe that applying subtraction on any two roots in 
	\[\bigg\{\frac{1}{2}(1,1,1,1),\frac{1}{2}(1,-1,-1,1),\frac{1}{2}(1,-1,1,-1) \bigg\} \]
	produces a root in $\Phi_S$, then $(CC)$ says that there are at most one $$\beta^\vee\in \bigg\{\frac{1}{2}(1,1,1,1),\frac{1}{2}(1,-1,-1,1),\frac{1}{2}(1,-1,1,-1) \bigg\}$$ which belongs to $S^+_{t_0}$. Thus $\#S^+_{t_0}\leq 2$. On the other hand, consider the subroot system formed by roots of length 2, we know that it is also of type $D_4$, thus $S^+_{s_0}$ could only be a subset of
	\[\bigg\{(1,-1,0,0),(0,1,-1,0),(0,0,1,-1),(0,0,1,1) \bigg\} \]
	after conjugating by a Weyl element. Observe that \[\mbox{$(0,0,1,-1)-(0,0,1,1)=2(0,0,0,-1)$ and $(1,-1,0,0)-(0,0,1,\pm 1)\in \Phi_S$,}\] thus $(CC)$ says that $\#S^+_{s_0}\leq 2$ and it is a subset of \[\bigg\{(1,-1,0,0),(0,1,-1,0) \bigg\}\] after conjugating by a Weyl element. But the dimension of $\Phi_S$ is 4, whence $S^+=S^+_{s_0}\bigcup S^+_{t_0}$ is linearly independent.
\end{proof}
\begin{rem}
	From the above arguments, Theorem \ref{linearindep} holds also for finite central covering groups once a covering group version of Silberger's main theorem in \cite{silberger1980special} is established.
\end{rem}

Let $\iota$ be the rank of the center of the Levi subgroup $M$ of $P=MN$ in $G$, then
\begin{cor}
	Keep the notions as before. The length of the regular generalized principal series $Ind^G_P(\rho)$ is equal to $2^{\#S}$ which is at most $2^\iota$.
\end{cor}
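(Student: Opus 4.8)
The plan is to count the connected components of $^{0}\mathfrak{a}_M^\star - \bigcup_{\alpha^\vee\in S}\operatorname{Ker}(\alpha^\vee)$, since by Theorem \ref{mthm} these components are in bijection with the Jordan--H\"older constituents $\pi_\Gamma$ of $Ind^G_P(\rho)$, so the length of $Ind^G_P(\rho)$ equals that number of components. First I would invoke Theorem \ref{linearindep}: the set $S$ is linearly independent. Therefore, after completing $\{\alpha^\vee:\alpha^\vee\in S\}$ to a basis of (the dual of) $^{0}\mathfrak{a}_M^\star$, the hyperplanes $\operatorname{Ker}(\alpha^\vee)$, $\alpha^\vee\in S$, are $\#S$ hyperplanes in general position through the origin, in fact coordinate hyperplanes with respect to suitable linear coordinates $(x_1,\dots,x_{\#S},\dots)$, where $x_i = \langle\cdot,\alpha_i^\vee\rangle$ for an enumeration $S=\{\alpha_1^\vee,\dots,\alpha_{\#S}^\vee\}$.

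Next I would observe that a point of $^{0}\mathfrak{a}_M^\star$ lies off all these hyperplanes precisely when each coordinate $x_i\ne 0$, and two such points lie in the same connected component if and only if the sign vector $(\operatorname{sgn}(x_1),\dots,\operatorname{sgn}(x_{\#S}))\in\{\pm1\}^{\#S}$ is the same; conversely every sign vector is realized (here linear independence is exactly what guarantees no sign pattern is empty, since the map to the $x_i$-coordinates is a surjective linear map). Hence the number of connected components is exactly $2^{\#S}$, and by Theorem \ref{mthm} this is the length of $Ind^G_P(\rho)$.

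Finally, for the bound $2^{\#S}\le 2^\iota$ it suffices to show $\#S\le\iota$. But $S$ is a linearly independent subset of the coroot lattice sitting inside the dual of $^{0}\mathfrak{a}_M^\star\subseteq\mathfrak{a}_M^\star$, and $\dim_\mathbb{R}\mathfrak{a}_M^\star = \dim_\mathbb{R}\bigl(X(M)_F\otimes_\mathbb{Z}\mathbb{R}\bigr)$ equals $\iota$, the rank of the center of $M$ (equivalently the split rank of the center of $M$, the standard identification $\iota = \dim\mathfrak{a}_M$). Thus $\#S\le\dim\mathfrak{a}_M=\iota$, giving $2^{\#S}\le 2^\iota$. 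The only mild subtlety — and the step I would be most careful about — is the clean identification of $\iota$ with $\dim_\mathbb{R}\mathfrak{a}_M^\star$ and the fact that $S$, being a set of coroots, lands in a space of that dimension; everything else is the elementary hyperplane-arrangement count combined with the already-established linear independence. $\hfill\qed$
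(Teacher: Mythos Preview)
Your proposal is correct and is exactly the argument the paper has in mind: the corollary is stated immediately after Theorem \ref{linearindep} with no proof, because it follows at once by combining Theorem \ref{mthm} (bijection between constituents and connected components) with the linear independence of $S$, via the elementary sign-vector count you describe. Your identification $\iota=\dim_\mathbb{R}\mathfrak{a}_M^\star$ and the resulting bound $\#S\le\iota$ are the intended ones.
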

\section{square-integrability}
In this section, we would like to investigate the discreteness/temperedness property of the subquotients of the regular generalized principal series $Ind^G_P(\rho)$. For simplicity, we may assume that the real unramified part $\omega_\rho$ of the central character of $\rho$ lies in $\bar{\mathfrak{a}}_M^{*+}$, and we define $S$ to be the set of those relative positive coroots $\alpha^\vee$ such that the co-rank one parabolic induction $Ind^{M_\alpha}_{P\cap M_\alpha}(\rho)$ associated to $\alpha\in \Phi^0_M$ is reducible. Among the connected components which index the constituents in $JH(Ind^G_P(\rho))$ (see Theorem \ref{mthm}), there exists a distinguished one
\[\Gamma_+=\bigcap_{\alpha^\vee\in S}(\alpha^\vee)^{-1}(\mathbb{R}^+) \]
which plays a key role in what follows.

To start, let us first state a necessary condition concerning Casselman's square-integrability criterion as follows:
\begin{lem}\label{dsn}
	For generalized principal series $Ind^G_P(\rho)$ with $\rho$ regular supercuspidal representation of the Levi subgroup $M$ of $P=MN$ in $G$, let $S$ be the set of relative positive coroots such that the associated co-rank one parabolic inductions are reducible. Then there exists square-integrable $\pi\in JH(Ind^G_P(\rho))$ only if 
	\[\mathfrak{a}_M^*=~^{0}\mathfrak{a}_M^*=Span_\mathbb{R}\{\alpha:~\alpha^\vee\in S \}. \] 
\end{lem}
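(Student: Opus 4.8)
The plan is to pit Casselman's square--integrability criterion against the Jacquet--module formula of Theorem~\ref{mthm}: square--integrability forces all the exponents (at $P$ and at its associates) deep into a pointed cone, and I will show this cannot survive once $\bigcap_{\alpha^\vee\in S}\mathrm{Ker}(\alpha^\vee)\neq\{0\}$. First I would reduce the two asserted equalities to one. Since $\mathrm{Span}_{\mathbb R}\{\alpha:\alpha^\vee\in S\}\subseteq{}^{0}\mathfrak a_M^\star\subseteq\mathfrak a_M^\star$ always, it suffices to prove the single equality $\mathrm{Span}_{\mathbb R}\{\alpha:\alpha^\vee\in S\}=\mathfrak a_M^\star$; equivalently, since $S$ is linearly independent by Theorem~\ref{linearindep}, that $\#S=\iota$; equivalently that $V_0:=\bigcap_{\alpha^\vee\in S}\mathrm{Ker}(\alpha^\vee)=\{0\}$ inside $\mathfrak a_M^\star$. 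If $S=\emptyset$ then $Ind_P^G(\rho)$ is irreducible by Theorem~\ref{irredcrit}; for $P\neq G$ this full induction of a supercuspidal from a proper parabolic is not square--integrable (Casselman at $Q=P$ would place the whole $W_M$--orbit of $\omega_\rho$ inside the pointed open cone ${}^{+}\mathfrak a_M^\star$, forcing $\omega_\rho=0$, but then all exponents have real part $0\notin{}^{+}\mathfrak a_M^\star$), while for $P=G$ we have $\mathfrak a_M^\star=0$ and nothing to prove. So assume $S\neq\emptyset$ and $P\neq G$.

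For the core argument, let $\pi=\pi_\Gamma$ be square--integrable, normalized as in the section so that $\omega_\rho\in\bar{\mathfrak a}_M^{\star+}$; then $\langle\omega_\rho,\alpha^\vee\rangle>0$ for every $\alpha^\vee\in S$, by dominance together with the pole dictionary. By Theorem~\ref{mthm} the real parts of the exponents of $r_P(\pi)$ are exactly $\{w.\omega_\rho:wC^+_M\subseteq\Gamma\}$, and Casselman's criterion---at $P$ and at each standard parabolic associated to it---forces each of these, together with its images under the Weyl elements identifying the associate Levis with $M$, into an open pointed cone (a positive--root cone, pointed because $\Delta_M$ is a basis). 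Suppose now $V_0\neq\{0\}$. If $V_0\cap{}^{0}\mathfrak a_M^\star\neq\{0\}$, choose $0\neq v$ there; then every wall $\mathrm{Ker}(\alpha^\vee)$ with $\alpha^\vee\in S$ contains $v$, so the open convex cone $\Gamma\subseteq{}^{0}\mathfrak a_M^\star$ is $\mathbb R v$--invariant. As $\Gamma$ is a finite union of the pointed chambers $wC^+_M$ it contains, there are elements $w_{\pm}$ with $w_{\pm}C^+_M\subseteq\Gamma$, $v\in\overline{w_+C^+_M}$ and $-v\in\overline{w_-C^+_M}$, i.e. $\pm\,w_{\pm}^{-1}v\in\overline{C^+_M}\subseteq\mathbb R_{\geq 0}\Phi_M^+$. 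Pairing $w_{\pm}.\omega_\rho$ (which lie in ${}^{+}\mathfrak a_M^\star$) with $v$ through a $W_M$--invariant form and tracking signs then pins $\omega_\rho$ onto a wall: a short computation yields a coroot $\gamma^\vee\in\Phi_M^0$ with $\langle\omega_\rho,\gamma^\vee\rangle=0$. By the uniformity $(UF)$ of the co--rank one Plancherel poles and the reducibility criterion ($\mu_\alpha$ has a pole at $\rho$ iff $Ind^{M_\alpha}_{P\cap M_\alpha}(\rho)$ is reducible), one may move $\gamma^\vee$ by an element of $W_S$ to some $\beta^\vee\in\Phi_S$ with $\langle\omega_\rho,\beta^\vee\rangle=0$---precisely the situation $(CC)$. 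Since $W_S$ fixes the unitary part $\rho_0$ of $\rho$ while $w_\beta$ now fixes $\omega_\rho$, we get $w_\beta\in W_\rho\setminus\{1\}$, contradicting regularity of $\rho$. If instead $V_0\cap{}^{0}\mathfrak a_M^\star=\{0\}$---which forces $V_0=({}^{0}\mathfrak a_M^\star)^{\perp}\neq\{0\}$, i.e. ${}^{0}\mathfrak a_M^\star\subsetneq\mathfrak a_M^\star$---one runs the parallel argument with $0\neq v\in({}^{0}\mathfrak a_M^\star)^{\perp}$ (a $W_M^0$--fixed vector), invoking the full family of associated parabolics to supply the constraint that $\mathbb R v$--invariance of $\Gamma$ supplied above. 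Hence $V_0=\{0\}$, as desired.

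The delicate point---and the one I expect to be the main obstacle---is the sign--chase that produces a coroot $\gamma^\vee\in\Phi_M^0$ with $\langle\omega_\rho,\gamma^\vee\rangle=0$ out of the positivity of exponents. One must keep careful track of the two ambient spaces (the chambers $\Gamma$, $wC^+_M$ live in ${}^{0}\mathfrak a_M^\star$, whereas Casselman's criterion is a condition in all of $\mathfrak a_M^\star$), of root--cone versus weight--cone normalizations, of exactly which associated parabolics' conditions to invoke, and of the interference of the non--Coxeter part $W_M^1$ (which need not fix the transverse direction $v$); and one must make sure the coroot produced really lies in $\Phi_M^0$ rather than in $\Phi_M\setminus\Phi_M^0$, so that $(UF)$ and the reducibility dictionary apply. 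Once these are sorted out, the two degeneracy regimes run in parallel and the rest is bookkeeping on top of Theorems~\ref{mthm} and~\ref{linearindep} and the uniformity $(UF)$.
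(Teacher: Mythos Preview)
Your approach is very different from the paper's and, as written, has genuine gaps.

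The paper's proof is essentially two lines: if the span condition fails, then all co-rank-one reducibility is ``trapped'' inside a proper Levi $L$, so by Corollary~\ref{univirred} every constituent $\pi_\Gamma$ is a full irreducible induction $Ind^G_Q(\sigma)$ from $Q=LV\supsetneq P$, and a full induction from a proper parabolic is never square-integrable. (Alternatively one cites Silberger's theorem.) No sign-chasing, no $(UF)$, no $(CC)$, no Theorem~\ref{linearindep}.

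Your argument, by contrast, tries to extract a contradiction directly from Casselman's criterion and the chamber geometry. The difficulties you flag in your last paragraph are not just technicalities; they are real holes. Specifically: (i) the ``short computation'' that is supposed to produce some $\gamma^\vee\in\Phi_M^0$ with $\langle\omega_\rho,\gamma^\vee\rangle=0$ from the inputs $w_\pm^{-1}(\pm v)\in\overline{C^+_M}$ and $w_\pm.\omega_\rho\in{}^{+}\mathfrak a_M^\star$ is never written down, and I do not see how it goes---the cone ${}^{+}\mathfrak a_M^\star$ is spanned by $\Delta_M$, not $\Delta_M^0$, and pairing against an arbitrary $v\in V_0$ via an invariant form gives no control on individual root contributions. (ii) Even granting such a $\gamma^\vee\in\Phi_M^0$, the claim that it can be moved by $W_S$ into $\Phi_S$ is unjustified: $\Phi_S$ is the sub\-system generated by $S$, and a random element of $\Phi_M^0$ need not lie in any $W_S$-orbit through $\Phi_S$. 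Without this, $(UF)$ and $(CC)$ do not apply and you cannot conclude $w_\gamma\in W_{\rho_0}$. (iii) The second case, $V_0\cap{}^{0}\mathfrak a_M^\star=\{0\}$ (equivalently $\mathfrak a_M^\star\neq{}^{0}\mathfrak a_M^\star$), is dispatched in one sentence that does not actually parse; here $v$ lies outside the space where $\Gamma$ lives, so ``$\mathbb{R}v$-invariance of $\Gamma$'' has no content and the ``parallel argument'' cannot be run as stated.

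My suggestion: abandon the sign-chase and argue as the paper does. Once you see that failure of the span condition lets you factor every constituent through a proper parabolic (Corollary~\ref{univirred}), the lemma is immediate.
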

\begin{proof}
	This follows from Corollary \ref{univirred} and the fact that a full induced representation is not square-integrable. One may also refer to a theorem of Silberger (cf. \cite[Theorem 3.9.1]{silberger1981discrete}).
\end{proof}	
Given Lemma \ref{dsn}, we could now reinterpret Casselman's criterion under our setting which says that
\begin{prop}\label{ds}
Keep the notions as before. $JH(Ind^G_P(\rho))$ contains at most one square-integrable constituent. Moreover the representation $\pi_\Gamma$ is square-integrable if and only if \[\mbox{$^{0}\mathfrak{a}_M^*=\mathfrak{a}_M^*=Span_{\mathbb{R}}\{\alpha:~\alpha^\vee\in S \}$ and $|\omega_\rho^w(H_{\alpha^\vee}(\mathfrak{w}))|<1$} \] for any root $\alpha\in \bar{C}^+_M$ and any $w\in W_M$ such that $wC^+_M\subset \Gamma$, or equivalently, $$|\omega_\rho(H_{\alpha^\vee}(\mathfrak{w}))|<1,~\forall \alpha\in\Phi_M\cap \bar{\Gamma}.$$
That is to say $\Gamma=\Gamma_+$.
\end{prop}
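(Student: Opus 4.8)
The plan is to feed the explicit Jacquet modules of Theorem~\ref{mthm} into Casselman's square-integrability criterion recalled above, after first clearing away the degenerate case. If either $^{0}\mathfrak{a}_M^* \neq \mathfrak{a}_M^*$ or $\mathfrak{a}_M^* \neq \mathrm{Span}_{\mathbb R}\{\alpha:\alpha^\vee\in S\}$, then by Lemma~\ref{dsn} no constituent of $Ind_P^G(\rho)$ is square-integrable and every asserted equivalence holds vacuously; so I assume the span condition throughout. Then $^{0}\mathfrak{a}_M^*=\mathfrak{a}_M^*$, so by Theorem~\ref{mthm} the constituents $\pi_\Gamma$ are indexed by the connected components $\Gamma$ of $\mathfrak{a}_M^*-\bigcup_{\alpha^\vee\in S}\mathrm{Ker}(\alpha^\vee)$, each of which is a union of relative Weyl chambers $wC_M^+$; and by Theorem~\ref{linearindep} the set $S$ is linearly independent, hence a basis of $\mathfrak{a}_M^*$, so $\Gamma_+$ is the open simplicial cone dual to $S$. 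Note also that $\omega_\rho\in\Gamma_+$: for $\alpha^\vee\in S$ the unique pole of $\mu_\alpha(\cdot)$ lies off the unitary axis, whence $\langle\omega_\rho,\alpha^\vee\rangle\neq 0$, and since $\omega_\rho\in\bar{\mathfrak a}_M^{*+}$ is dominant and $\alpha^\vee$ is a positive relative coroot this forces $\langle\omega_\rho,\alpha^\vee\rangle>0$.

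Next I would run Casselman's criterion. By Theorem~\ref{mthm} one has $r_P(\pi_\Gamma)=\bigoplus_{wC_M^+\subset\Gamma}\rho^w$, so the real exponents of $r_P(\pi_\Gamma)$ are the $\omega_\rho^w$ with $wC_M^+\subset\Gamma$. For the remaining standard parabolics $Q=LV$ associated to $P$ I use $W_M=W_M^0\rtimes W_M^1$ (Lemma~\ref{key2}) together with Lemma~\ref{key1} to replace the non-Coxeter group $W_M$ by the Coxeter group $W_M^0$, which acts simply transitively on the relative Weyl chambers: this lets me take $Q=w(P)$ with $w\in W_M^0$, and the Bernstein--Zelevinsky geometric lemma then computes $r_V(\pi_\Gamma)$ with real exponents among the $W_M^0$-translates of $\omega_\rho$ attached to the Weyl chambers lying inside $\Gamma$. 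In this configuration Rodier's analysis of the split-torus case (cf. \cite[Proposition~3 and the surrounding discussion]{rodier1981decomposition}) carries over and yields: $\pi_\Gamma$ is square-integrable if and only if $|\omega_\rho^w(H_{\alpha^\vee}(\mathfrak w))|<1$ for every relative root $\alpha\in\bar C_M^+$ and every $w$ with $wC_M^+\subset\Gamma$. Since $\bar\Gamma=\bigcup_{wC_M^+\subset\Gamma}w\bar C_M^+$ and $W_M$ preserves $\Phi_M$, this is equivalent to $\langle\omega_\rho,\beta^\vee\rangle>0$ for every $\beta\in\Phi_M\cap\bar\Gamma$.

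It then remains to show, under the span condition, that ``$\langle\omega_\rho,\beta^\vee\rangle>0$ for all $\beta\in\Phi_M\cap\bar\Gamma$'' is equivalent to $\Gamma=\Gamma_+$. For the direction $\Gamma=\Gamma_+\Rightarrow$ condition: the proof of Theorem~\ref{linearindep} realizes $S$ as a base of the sub-coroot-system $\Phi_S\subset\Phi_M^\vee$, so any $\beta\in\Phi_M\cap\bar\Gamma_+$ satisfies $\langle\beta,\alpha^\vee\rangle\geq 0$ for all $\alpha^\vee\in S$, and a short root-system computation (using dominance of $\omega_\rho$ and $\langle\omega_\rho,\alpha^\vee\rangle>0$ for $\alpha^\vee\in S$) gives $\langle\omega_\rho,\beta^\vee\rangle>0$. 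Conversely, if $\Gamma\neq\Gamma_+$, choose $\alpha_0^\vee\in S$ negative on $\Gamma$ and, by successively adjoining base coroots from $S$ that pair negatively with it, produce a positive root $\beta'$ of $\Phi_M$ dual to a coroot of $\Phi_S$ with $-\beta'\in\bar\Gamma$ and $\langle\omega_\rho,\beta'^\vee\rangle>0$; then $-\beta'$ violates the condition. Since $\Gamma_+$ is a single chamber, this simultaneously shows that $JH(Ind_P^G(\rho))$ contains at most one square-integrable constituent.

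The step I expect to be the main obstacle is the middle one: transferring Rodier's square-integrability argument to the relative setting requires verifying that the reduction from $W_M$ to the Coxeter group $W_M^0$ (via Lemmas~\ref{key2} and~\ref{key1}) makes the exponent bookkeeping in Casselman's criterion --- run over \emph{all} standard parabolics $Q=LV$ associated to $P$ --- genuinely identical to the split-torus case, including the passage between roots of $\Phi_M$, $\Phi_M^0$ and the subsystem $\Phi_S$. By contrast, the degenerate case is immediate from Lemma~\ref{dsn}, and pinning $\Gamma$ down to $\Gamma_+$ is a finite root-system computation.
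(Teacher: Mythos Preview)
Your proposal is broadly sound and arrives at the same statement, but the organization and the key step differ noticeably from the paper's own proof.

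The paper proves uniqueness \emph{first}, by a short geometric contradiction independent of the full characterization: if $\pi_{\Gamma_1}$ and $\pi_{\Gamma_2}$ were both square-integrable, choose a wall $\operatorname{Ker}\beta^\vee$ with $\beta^\vee\in S$ separating $\Gamma_1$ and $\Gamma_2$; since $S$ is linearly independent (Theorem~\ref{linearindep}), the one-dimensional line $\bigcap_{\alpha^\vee\in S,\;\alpha^\vee\neq\beta^\vee}\operatorname{Ker}\alpha^\vee$ lies in $\bar{\Gamma}_1\cup\bar{\Gamma}_2$, so some nonzero $v$ and $-v$ both lie in $\bar{\Gamma}_1\cup\bar{\Gamma}_2$; this contradicts Casselman's criterion, which forces all exponents into~$^{+}\mathfrak{a}_M^\star$. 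You instead deduce uniqueness only at the end, as a corollary of pinning $\Gamma$ down to $\Gamma_+$. Both work, but the paper's argument is cleaner and avoids the root-system combinatorics you set up in your third paragraph.

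For the ``only if'' direction the paper simply invokes Lemma~\ref{dsn} and the normalization $\omega_\rho\in\bar{\mathfrak a}_M^{*+}$; for the ``if'' direction it says ``direct check'' and points to Heiermann \cite[Corollary~8.7]{heiermann2004decomposition}. You attempt to carry out Casselman's criterion by hand, reducing from $W_M$ to the Coxeter group $W_M^0$ via Lemmas~\ref{key2} and~\ref{key1} and then importing Rodier's split-torus exponent bookkeeping. That is a legitimate route, and you correctly identify its weak point: the claim that the exponents of $r_V(\pi_\Gamma)$ at \emph{every} associated standard parabolic $Q=LV$ are accounted for by the $W_M^0$-translates alone needs a line of justification (it is exactly the content of Lemma~\ref{key1} that the $W_M^1$-twists contribute nothing new). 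Once that is said, your transfer goes through; but the paper sidesteps the whole issue by citing Heiermann.

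In short: your argument is more self-contained but heavier; the paper's is terser, proves uniqueness by a separate one-line trick using linear independence of $S$, and outsources the positive direction.
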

\begin{proof}
	The argument of the uniqueness claim is as follows:
	
	Otherwise, if $\pi_{\Gamma_1}$ and $\pi_{\Gamma_2}$ are square-integrable, then there exists a wall $Ker~\beta^\vee$, with $\beta^\vee\in S$, separates $\Gamma_1$ and $\Gamma_2$. Hence $\bar{\Gamma}_1\cup \bar{\Gamma}_2$ contains $\bigcap\limits_{\beta^\vee\neq \alpha^\vee\in S}Ker~\alpha^\vee$ by Theorem \ref{linearindep}, which in turn says that there exits $0\neq w\in \mathfrak{a}^\star_M$ such that $\big\{w,w^{-1}\big\}\subset\bar{\Gamma}_1\cup \bar{\Gamma}_2$. Contradiction. 
	
	The only if part follows from Lemma \ref{dsn} and the definition of $\omega_\rho\in \bar{\mathfrak{a}}^{*+}_M$. 
	
	The if part follows from a direct check, one may also refer to a general theorem (cf. \cite[Corollary 8.7]{heiermann2004decomposition}).	
\end{proof}

In view of Corollary \ref{univirred}, almost the same reinterpretation works for Casselman's temperedness criterion (please refer to \cite[Proposition 6]{rodier1981decomposition} for details).

\begin{prop}\label{tem}
	Keep the notions as before. $JH(Ind^G_P(\rho))$ contains at most one tempered constituent. The representation $\pi_\Gamma$ is tempered if and only if $\omega_\rho$ restricting to the subgroup $\bigcap\limits_{\alpha^\vee\in S}Ker(\alpha^\vee)$ of $M$ is unitary and $\Gamma=\Gamma_+$.
\end{prop}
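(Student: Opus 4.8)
The plan is to reinterpret Casselman's temperedness criterion along the lines of \cite[Proposition~6]{rodier1981decomposition}, in complete parallel with the square-integrable case treated in Proposition~\ref{ds}. Recall from Section~2 that $\pi\in JH(Ind^G_P(\rho))$ is tempered if and only if $Re(\mathcal{E}xp(\pi_V))\subset{}^{+}\bar{\mathfrak{a}}^\star_M$ for every standard parabolic $Q=LV$ associated to $P=MN$. By Theorem~\ref{mthm} the Jacquet module $r_P(\pi_\Gamma)=\bigoplus_{wC^+_M\subset\Gamma}\rho^w$ is known explicitly, so the real exponents occurring in $r_P(\pi_\Gamma)$ are exactly the points $w.\omega_\rho$ with $wC^+_M\subset\Gamma$; the exponents along the remaining associated $Q$ are then read off from the geometrical lemma together with the intertwining-operator bookkeeping of Lemma~\ref{key1}. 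The whole statement thus becomes a convexity question: for which $\Gamma$ do all of these real exponents land in the closed cone spanned by the relative simple roots, simultaneously for every associated $Q$?

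The uniqueness assertion is obtained as in Proposition~\ref{ds}. If $\pi_{\Gamma_1}$ and $\pi_{\Gamma_2}$ were both tempered with $\Gamma_1\neq\Gamma_2$, then some wall $Ker~\beta^\vee$, $\beta^\vee\in S$, separates $\Gamma_1$ and $\Gamma_2$, and by the linear independence of $S$ (Theorem~\ref{linearindep}) the nontrivial subspace $\bigcap_{\beta^\vee\neq\alpha^\vee\in S}Ker~\alpha^\vee$ sits inside $\bar{\Gamma}_1\cup\bar{\Gamma}_2$; hence a pair $\{v,-v\}$ with $v\neq0$ lies in $\bar{\Gamma}_1\cup\bar{\Gamma}_2$. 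Since ${}^{+}\bar{\mathfrak{a}}^\star_M$, being the cone spanned by the linearly independent relative simple roots, contains no such pair, testing temperedness against the appropriate associated parabolic yields the desired contradiction, just as in the square-integrable case.

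For the characterization, the most economical route is to reduce to Proposition~\ref{ds} via Corollary~\ref{univirred}. Let $Q=LV$ be the smallest standard parabolic whose Levi $L$ contains all the co-rank one reducibility data; then $S$ spans the relative root space of $M$ in $L$, and $\pi_\Gamma=Ind^G_Q(\sigma_\Gamma)$ for a unique $\sigma_\Gamma\in JH(Ind^L_{L\cap P}(\rho))$. Now $\pi_\Gamma$ is tempered if and only if $\sigma_\Gamma$ is a tempered representation of $L$, and, since $\omega_\rho\in\bar{\mathfrak{a}}_M^{*+}\subset{}^{+}\bar{\mathfrak{a}}^\star_M$, this happens if and only if (i) the restriction of $\omega_\rho$ to the split part of the centre of $L$ is unitary and (ii) $\sigma_\Gamma$ is square-integrable modulo that centre. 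Condition (i) is precisely the stated unitarity of $\omega_\rho$ on $\bigcap_{\alpha^\vee\in S}Ker(\alpha^\vee)$, because $S$ spans the relative root space of $M$ in $L$; and, granting (i), condition (ii) is governed by Proposition~\ref{ds} applied to $L$ -- whose distinguished chamber is again $\Gamma_+$, as it depends only on $S$ -- which forces $\Gamma=\Gamma_+$. The converse direction is a direct verification that every real exponent $w.\omega_\rho$ with $wC^+_M\subset\Gamma_+$, and likewise along each associated $Q$, lies in ${}^{+}\bar{\mathfrak{a}}^\star_M$; one may instead quote \cite[Corollary~8.7]{heiermann2004decomposition}.

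The step I expect to be the main obstacle is verifying this cone membership for all associated parabolics $Q=LV$ at once, not merely for $Q=P$. This is exactly where the geometry of $\Gamma_+$ interacts with the action of the non-Coxeter group $W_M$, and where the decomposition $W_M=W_M^0\rtimes W_M^1$ of Lemma~\ref{key2} and the isomorphisms of Lemma~\ref{key1} are needed to control the intertwining operators and hence the exponents attached to each $Q$; once those exponents are pinned down, the convexity argument finishing the proof is routine.
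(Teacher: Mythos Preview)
Your approach is essentially the paper's: its entire proof reads ``The same type argument as above works (cf.\ \cite[Proposition~6]{rodier1981decomposition}),'' and the sentence immediately preceding the proposition already flags Corollary~\ref{univirred} as the bridge from temperedness back to Proposition~\ref{ds}, exactly as you propose. Your anticipated ``main obstacle'' in fact dissolves once you pass to $L$, since the bookkeeping over all associated parabolics is absorbed into the square-integrability statement applied inside $L$; the only small caveat is that the Levi $L$ you want need not arise from a \emph{standard} parabolic (the roots $\alpha$ with $\alpha^\vee\in S$ lie in $\Phi_M^0$ and need not be simple in $\Phi_M$, cf.\ the $SO_{12}$ example in Section~2), but Corollary~\ref{univirred} does not require this and the argument goes through unchanged.
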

\begin{proof}
	The same type argument as above works (cf. \cite[Proposition 6]{rodier1981decomposition}).
\end{proof} 
\begin{rem}
	All the above claims also hold for finite central covering groups.
\end{rem}
\section{genericity}
Note that there exists at most one generic constituent guaranteed by Rodier's hereditary theorem (see \cite[Theorem 4]{rodiermodeles}) which has nothing to do the linearly independence property of co-rank one reducibility conditions. In this section, assume that $G$ is a connected quasi-split reductive group, and $\rho$ is a regular supercuspidal representation of the Levi subgroup $M$ of a standard parabolic subgroup $P=MN$ in $G$, we would like to give a characterization of the unique generic constituent of the regular generalized principal series $Ind^G_P(\rho)$. Given such a characterization, we serve you an easy and intuitive proof of Casselman--Shahidi's main theorem (i.e. \cite[Theorem 1]{casselman1998irreducibility}) on their standard module conjecture/generalized injectivity conjecture for regular generalized principal series.

Recall that $B=TU$ is a fixed Borel subgroup of $G$ with $U$ a maximal unipotent subgroup of $G$, $\theta$ is a generic character of $U$ and $\mathcal{W}_\theta:=Ind^G_U(\theta)$ is the Whittaker function space. As in \cite[Section 3]{shahidi1990proof}, we assume that the generic character $\theta$ of $U$ and the longest Weyl element $w_0^G$ in $W$ are compatible. Denote by $\theta_M$ the generic character $\theta$ of $U$ restricting to $N$ which is compatible with the longest Weyl element $w_0^M$ in $W^M$. In what follows, we always assume that the irreducible admissible representation $\rho$ of $M$ is {\bf regular, $\theta_M$-generic and supercuspidal}. For simplicity, we may also assume that the real unramified part $\omega_\rho$ of the central character of $\rho$ lies in $\bar{\mathfrak{a}}_M^{*+}$ the closure of $\mathfrak{a}_M^{*+}$.

Indeed, through a clear understanding of the proof of our Rodier type structure theorem, i.e. Theorem \ref{mthm}, it is readily to see that the problem of sorting out the generic constituent is reduced to determine the genericity of the constituents of those co-rank one inductions $Ind^{M_\alpha}_{P\cap M_\alpha}(\rho^w)$, where the pairs $(\alpha,w)$ are those, $\alpha\in \Delta_M^0$ and $w\in W_M^0$, satisfying the following conditions 
\begin{enumerate}[(i)]
	\item $w.\alpha$ is a relative positive root, i.e. $w.\alpha\in (\Phi^0_M)^+$. 
	\item The co-rank one induction $Ind^{M_\alpha}_{P\cap M_\alpha}(\rho^w)$ is reducible.
\end{enumerate}
The above conditions are equivalent to saying that $w.\alpha^\vee\in S$.

Let $A(w,ww_\alpha)$ be the unique, up to scalar, non-zero intertwining map in
\[Hom_G(Ind^G_P(\rho^w),~Ind^G_P(\rho^{ww_\alpha}))\simeq \mathbb{C} \]
which has the induction by stage property, i.e. the following diagram commutes:
\[\xymatrix{Ind^G_{P}(\rho^{w})\ar[r]^{A(w,ww_\alpha)}\ar@{=}[d]&Ind^G_{P}(\rho^{ww_{\alpha}})\ar@{=}[d]\\
	Ind^G_{P_\alpha}\circ Ind^{M_\alpha }_{P\cap M_\alpha}(\rho^{w})\ar[r]^-{Ind(A)}&Ind^G_{P_\alpha}\circ Ind^{M_\alpha }_{P\cap M_\alpha}(\rho^{ww_{\alpha}}),       } \]
where $P_\alpha=M_\alpha N_\alpha$ is the co-rank one parabolic subgroup associated to the relative simple root $\alpha\in \Delta^0_M$ (cf. \cite{silberger2015introduction}). Notice that $Ind^{M_\alpha}_{P\cap M_\alpha}(\rho^w)$ is a standard module by the definition of $\rho$ and the choice of the pair $(\alpha,w)$, the Langlands--Shahidi theory says that (cf. \cite{shahidi1990proof})
\[Ker(A(w,ww_\alpha))\mbox{ is $\theta$-generic}. \]
Recall that $(SB)$ in the previous section says that the Jacquet module $r_P(Ker(A(w,ww_\alpha)))$ of $Ker(A(w,ww_\alpha))$ is equal to
\[r_P(Ker(A(w,ww_\alpha)))=\bigoplus_{w''}\rho^{w''}, \]
where $w''$ runs over those relative Weyl elements in $W_M$ such that 
\[wC_M^+\mbox{ and }w''C_M^+\mbox{ are on the same side of }Ker(w.\alpha^\vee). \]
This is equivalent to saying that
\[\left<w''.\beta, w.\alpha^\vee \right>>0, \]
as 
\[\left<w\beta,w.\alpha^\vee \right>=\left<\beta,\alpha^\vee \right>>0\]
for any $\beta\in C_M^+$. Then our Rodier type structure theorem, i.e. Theorem \ref{mthm} implies that 
\begin{thm}\label{gene}
	Keep the notions as previous. Set $$\Gamma_+:=\bigcap\limits_{\alpha^\vee\in S}(\alpha^\vee)^{-1}(\mathbb{R}^+).$$ Then we have
	\[\pi_{\Gamma_+}\mbox{ is the unique generic constituent in }JH(Ind^G_P(\rho)), \]
	here $\pi_{\Gamma_+}$ stands for the corresponding constituent given by $\Gamma_+$ in Theorem \ref{mthm}.
\end{thm}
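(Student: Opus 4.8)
The plan is to establish uniqueness by citing Rodier's hereditary theorem and the multiplicity-one property of Whittaker functionals (recalled at the beginning of this section, see \cite{rodiermodeles}), and then to pin down the geometric label of the unique $\theta$-generic constituent. Since $\rho$ is $\theta_M$-generic, $Ind^G_P(\rho)$ is $\theta$-generic, so this constituent exists; write it as $\pi_\Gamma$. Because $S$ is linearly independent by Theorem \ref{linearindep}, among the connected components of $^{0}\mathfrak{a}_M^\star\setminus\bigcup_{\gamma^\vee\in S}Ker(\gamma^\vee)$ the component $\Gamma_+$ is the unique one lying on the positive side of every wall $Ker(\gamma^\vee)$, $\gamma^\vee\in S$. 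Hence it suffices to show that $\Gamma$ lies on the positive side of $Ker(\gamma^\vee)$ for each $\gamma^\vee\in S$.

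For a fixed $\gamma^\vee\in S$ I would argue as follows. As in the discussion preceding the statement, choose $\alpha\in\Delta_M^0$ and $w\in W_M^0$ with $w.\alpha^\vee=\gamma^\vee$ for which the co-rank one induction $Ind^{M_\alpha}_{P\cap M_\alpha}(\rho^w)$ is a standard module — this is where the normalization $\omega_\rho\in\bar{\mathfrak{a}}_M^{*+}$ enters. By the Langlands--Shahidi theory the unique generic constituent of a standard module is a submodule, so $Ker(A(w,ww_\alpha))$ is $\theta$-generic; a $\theta$-generic representation of finite length admits a $\theta$-generic irreducible subquotient (an elementary consequence of the exactness properties of $Hom_U(-,\theta)$), and such a subquotient lies in $JH(Ind^G_P(\rho^w))=JH(Ind^G_P(\rho))$, hence equals $\pi_\Gamma$. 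Therefore $\pi_\Gamma\in JH(Ker(A(w,ww_\alpha)))$, so by exactness of the Jacquet functor $r_P(\pi_\Gamma)$ is a sub-sum of $r_P(Ker(A(w,ww_\alpha)))$. Now invoke $(SB)$: the latter Jacquet module equals $\bigoplus\rho^{w''}$ over those $w''$ for which $wC^+_M$ and $w''C^+_M$ lie on the same side of $Ker(w.\alpha^\vee)=Ker(\gamma^\vee)$, and $wC^+_M$ itself lies on the positive side of $Ker(\gamma^\vee)$ since $\langle w\beta,\gamma^\vee\rangle=\langle\beta,\alpha^\vee\rangle>0$ for all $\beta\in C^+_M$ ($\alpha$ being simple). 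Comparing with the description $r_P(\pi_\Gamma)=\bigoplus_{w'C^+_M\subset\Gamma}\rho^{w'}$ from Theorem \ref{mthm} and using that $\rho$ is regular (so distinct chambers give non-isomorphic twists), every chamber contained in $\Gamma$ lies on the positive side of $Ker(\gamma^\vee)$; since $\Gamma$ is connected and avoids this wall, $\Gamma\subseteq(\gamma^\vee)^{-1}(\mathbb{R}^+)$.

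Running $\gamma^\vee$ over $S$ then gives $\Gamma\subseteq\bigcap_{\gamma^\vee\in S}(\gamma^\vee)^{-1}(\mathbb{R}^+)=\Gamma_+$, and as $\Gamma$ is itself a full connected component of the arrangement we conclude $\Gamma=\Gamma_+$, i.e. $\pi_{\Gamma_+}$ is the generic constituent. The hard part is the middle paragraph: one must choose the pair $(\alpha,w)$ so that $Ind^{M_\alpha}_{P\cap M_\alpha}(\rho^w)$ is genuinely a standard module \emph{with the orientation that puts the relevant half-space on the positive side of $Ker(\gamma^\vee)$} rather than on its complement; both features trace back to the dominance assumption $\omega_\rho\in\bar{\mathfrak{a}}_M^{*+}$, and one also needs the (standard) fact that $JH(Ind^G_P(\rho^w))$ is independent of $w\in W_M$. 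Once those points are in place, the statement is a direct splicing of Theorem \ref{mthm}, the identity $(SB)$, and Langlands--Shahidi genericity of kernels of intertwining operators between standard modules.
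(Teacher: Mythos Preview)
Your proof is correct and follows essentially the same line as the paper's: for each $\gamma^\vee\in S$ write $\gamma^\vee=w.\alpha^\vee$ with $\alpha\in\Delta_M^0$, invoke Langlands--Shahidi to see that $Ker(A(w,ww_\alpha))$ carries the Whittaker functional, and then use $(SB)$ together with Theorem \ref{mthm} to place the generic component on the positive side of the wall $Ker(\gamma^\vee)$. The only superfluous ingredient is your appeal to Theorem \ref{linearindep}: since $\Gamma_+$ is convex and contains $C_M^+$ (as $\langle\beta,\gamma^\vee\rangle>0$ for $\beta\in C_M^+$ and $\gamma^\vee\in S$), it is automatically a single connected component, independently of the linear independence of $S$.
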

Now we could serve you a simple intuitive proof of Casselman--Shahidi's main theorem \cite[Theorem 1]{casselman1998irreducibility} which is a corollary of the above Theorem \ref{gene} as follows.

Recall that the regular generalized principal series $Ind^G_P(\rho)$ has a unique irreducible subrepresentation $\pi_+$. Moveover, $\pi_{+}$ is uniquely determined by the fact that its Jacquet module with respect to $P$ contains $\rho$, i.e.
\[\rho\in r_P(\pi_{+}). \] 
As the co-rank one reducibility set $S$ consists of relative positive coroots, then for any $\alpha^\vee\in S$ and $\beta\in C_M^+$, we have
\[\left<\beta,\alpha^\vee\right>>0. \]
On the other hand, by convention, the real unramified part $\omega_\rho$ of the central character of $\rho$ lies in $\bar{\mathfrak{a}}_M^{*+}$. Thus 
\[\rho\in r_P(\pi_{\Gamma_+}), \]
whence $\pi_+=\pi_{\Gamma_+}$ is the generic constituent in $JH(Ind^G_P(\rho))$, especially Casselman--Shahidi's main theorem (cf. \cite[Theorem 1]{casselman1998irreducibility}) holds as follows:
\begin{thm}(see \cite[Theorem 1]{casselman1998irreducibility})\label{generic}
	For the standard representation $Ind^G_P(\rho)$ with $\rho$ a generic supercuspidal representation of the Levi subgroup $M$ of a standard parabolic subgroup $P=MN$ in a connected quasi-split reductive group $G$, we have that 
	\[\mbox{ the unique generic subquotient of }Ind^G_P(\rho) \mbox{ is a subrepresentation.} \] 
\end{thm}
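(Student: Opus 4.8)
The plan is to deduce Theorem \ref{generic} directly from Theorem \ref{gene} together with the Frobenius-reciprocity description of the unique irreducible subrepresentation of the standard module. First I would reduce to the regular case: a generic supercuspidal $\rho$ of $M$ need not be regular in $G$, but the argument only uses that $Ind^G_P(\rho)$ is a standard module with a unique generic subquotient (guaranteed by Rodier's hereditary theorem, \cite{rodiermodeles}) and a unique irreducible subrepresentation. For the regular case the two statements are literally the content of Theorem \ref{gene} and the discussion preceding Theorem \ref{generic}; for the general case one invokes the same mechanism, since the only inputs are the geometric lemma, the exactness of Jacquet functors, and the Langlands--Shahidi fact that the kernel of each co-rank one intertwining operator $A(w,ww_\alpha)$ attached to a reducibility point is $\theta$-generic.

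The core steps, in order, are: (1) recall that $Ind^G_P(\rho)$, being a standard module with $\omega_\rho\in\bar{\mathfrak{a}}_M^{*+}$, has a unique irreducible subrepresentation $\pi_+$, characterized by $\rho\in r_P(\pi_+)$ via Frobenius reciprocity and the exactness of $r_P$; (2) identify $\pi_+$ among the $\pi_\Gamma$ of Theorem \ref{mthm}: since every $\alpha^\vee\in S$ is a \emph{positive} relative coroot and every $\beta\in C_M^+$ satisfies $\langle\beta,\alpha^\vee\rangle>0$, the dominant chamber $C_M^+$ lies in $\Gamma_+=\bigcap_{\alpha^\vee\in S}(\alpha^\vee)^{-1}(\mathbb{R}^+)$, so $\rho=\rho^1\in r_P(\pi_{\Gamma_+})$ by the Jacquet-module formula in Theorem \ref{mthm}, forcing $\pi_+=\pi_{\Gamma_+}$; (3) invoke Theorem \ref{gene} to conclude $\pi_{\Gamma_+}$ is exactly the unique generic constituent. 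Chaining (2) and (3) gives that the unique generic subquotient $\pi_{\Gamma_+}=\pi_+$ is a subrepresentation, which is the assertion.

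I expect the main obstacle to be purely bookkeeping rather than conceptual: making sure the chamber-vs-coroot sign conventions line up so that ``$S$ consists of positive coroots'' genuinely implies $C_M^+\subset\Gamma_+$ and hence $\rho\in r_P(\pi_{\Gamma_+})$, and secondarily handling the non-regular generic $\rho$ without the clean parametrization of Theorem \ref{mthm}. For the latter I would note that, even without regularity, one still has a unique irreducible sub $\pi_+$ with $\rho\in r_P(\pi_+)$ and, by Rodier's hereditary theorem applied along a composition series built from the co-rank one operators $A(w,ww_\alpha)$, a unique generic subquotient whose Jacquet module likewise contains $\rho^{w}$ for $w$ in the ``dominant'' $W_M^0$-chamber; the same positivity argument then forces the generic subquotient to be $\pi_+$. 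Since the excerpt restricts to $\rho$ regular throughout the genericity section, in the write-up I would simply present the regular argument and remark that the general statement of \cite[Theorem 1]{casselman1998irreducibility} follows by the identical mechanism once Rodier's hereditary theorem is invoked.
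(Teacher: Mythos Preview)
Your proposal is correct and follows essentially the same route as the paper: the paper also argues that $S$ consists of positive relative coroots so $C_M^+\subset\Gamma_+$, hence $\rho\in r_P(\pi_{\Gamma_+})$ by Theorem~\ref{mthm}, whence the unique irreducible subrepresentation $\pi_+$ coincides with the generic constituent $\pi_{\Gamma_+}$ identified in Theorem~\ref{gene}. Your remarks on the non-regular case go slightly beyond the paper, which throughout Section~6 assumes $\rho$ is regular (as stated in boldface there) and simply records the resulting statement as \cite[Theorem 1]{casselman1998irreducibility}.
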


\section{aubert duality}
Motivated by Bernstein's unitarity conjecture on Aubert duality, and Hiraga's conjecture on the description of Aubert duality in terms of Arthur parameters (cf. \cite{hiraga2004functoriality}), we would like to investigate how Aubert duality acts on $JH(Ind^G_P(\rho))$ with $\rho$ a regular supercuspidal representation of the Levi subgroup $M$ of $P=MN$ in $G$. Please refer to \cite{aubert1995dualite} for its definitions and properties.

Note that the Aubert duality $D$ commutes with parabolic induction, thus for a relative simple root $\alpha$ in $\Phi_M^0$, $$D(Ker(A(w,ww_\alpha)))=Im(A(w,ww_\alpha))$$ provided that the unique, up to scalar, intertwining map $A(w,ww_\alpha)$ is not an isomorphism in
\[Hom_G(Ind^G_P(\rho^w),~Ind^G_P(\rho^{ww_\alpha})\simeq \mathbb{C}. \]
Therefore $$D(\pi_\Gamma)=\pi_{-\Gamma},$$ where $\pi_\Gamma$ is the constituent in $JH(Ind^G_P(\rho))$ corresponding to the component $\Gamma$ in $$^{0}\mathfrak{a}_M^\star-\bigcup_{\alpha^\vee\in S}Ker(\alpha^\vee)$$ given by Theorem \ref{mthm}. Thus
\begin{cor}
	Under Aubert duality, there is no fixed point within the constituents of regular generalized principal series.
\end{cor}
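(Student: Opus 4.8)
The corollary is precisely the assertion that $\Gamma\neq-\Gamma$ for every relevant $\Gamma$, once one has the identification $D(\pi_\Gamma)=\pi_{-\Gamma}$ stated just above it; so the plan has two parts. First, the cone-theoretic observation. The hyperplane arrangement $\bigcup_{\alpha^\vee\in S}\mathrm{Ker}(\alpha^\vee)$ in ${}^{0}\mathfrak{a}_M^{\star}$ is stable under $x\mapsto-x$, so $-\Gamma$ is again one of its chambers and $\pi_{-\Gamma}$ is meaningful. Assume $S\neq\emptyset$, i.e. by Theorem \ref{irredcrit} that $Ind_P^G(\rho)$ is reducible, which is the case in which the statement has content (when $S=\emptyset$ there is a single constituent, $Ind_P^G(\rho)$ itself, which is Aubert--self-dual because $D$ commutes with parabolic induction and fixes $\rho$ up to sign). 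Each chamber $\Gamma$ is then a nonempty open convex polyhedral cone with $0\notin\Gamma$; if $\Gamma=-\Gamma$, convexity forces $0\in\Gamma$, a contradiction. Hence $\Gamma\neq-\Gamma$, and $D(\pi_\Gamma)=\pi_{-\Gamma}\neq\pi_\Gamma$ for all $\Gamma$, which is the corollary.

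Second, I would fill in the claim $D(\pi_\Gamma)=\pi_{-\Gamma}$. Two inputs from \cite{aubert1995dualite}: $D$ commutes with parabolic induction and sends an irreducible to $\pm$ an irreducible; and $D^M(\rho^w)=\pm\rho^w$ since $\rho^w$ is supercuspidal. Thus $D$ carries each $Ind_P^G(\rho^w)$ to $\pm$ itself in the Grothendieck group and permutes the common set $JH(Ind_P^G(\rho))$. The crux is the co-rank one identity used in the excerpt: for $\alpha\in\Delta_M^0$ and $w\in W_M^0$ with $w.\alpha^\vee\in S\cup(-S)$, so that $A(w,ww_\alpha)$ is not an isomorphism, one has $D(\mathrm{Ker}\,A(w,ww_\alpha))=\mathrm{Im}\,A(w,ww_\alpha)$. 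To prove this, write $\mathrm{Ker}\,A(w,ww_\alpha)=Ind_{P_\alpha}^G(\tau^-)$ and $\mathrm{Im}\,A(w,ww_\alpha)=Ind_{P_\alpha}^G(\tau^+)$, where $\tau^-$ and $\tau^+$ are the two distinct Jordan--Hölder constituents of the length-two module $Ind^{M_\alpha}_{P\cap M_\alpha}(\rho^w)$, namely its unique irreducible submodule and its unique irreducible quotient. Since $D^{M_\alpha}$ commutes with $Ind^{M_\alpha}_{P\cap M_\alpha}$ and fixes $\rho^w$ up to sign, it stabilizes $\{\tau^-,\tau^+\}$; that it exchanges rather than fixes them is the rank-one prototype of Aubert duality (the exchange of the Steinberg-type subrepresentation with the trivial-type Langlands quotient of a reducible co-rank one induction). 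Applying $Ind_{P_\alpha}^G$ and $D^G\circ Ind_{P_\alpha}^G=Ind_{P_\alpha}^G\circ D^{M_\alpha}$ yields the identity.

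It remains to promote this single-wall identity to the antipodal map on chambers, and here one runs the bookkeeping of the proof of Theorem \ref{mthm} with $D$ carried through it. Recall that $\pi_\Gamma$ is the unique irreducible subrepresentation of $Ind_P^G(\rho^w)$ for any $w$ with $wC_M^+\subset\Gamma$, that $D$ exchanges the unique irreducible submodule and the unique irreducible quotient of such an induced module (via its compatibility with the contragredient, together with $(V\mapsto Ind_P^G V)$ commuting with $\vee$), and that the unique irreducible quotient of $Ind_P^G(\rho^w)$ is $\pi_{-\Gamma}$ — the chamber $-\Gamma$ being reached from $\Gamma$ by crossing every $S$-wall, just as $w^{-1}w'$ runs through a longest gallery in the proof of $(KO)$. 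Feeding the formulas $(SB)$ for $r_P(\mathrm{Ker}\,A(w,ww_\alpha))$ and $r_P(\mathrm{Im}\,A(w,ww_\alpha))$ into that same combinatorial pattern gives $D(\pi_\Gamma)=\pi_{-\Gamma}$, completing the argument. I expect the main obstacle to be exactly the rank-one assertion that $D^{M_\alpha}$ genuinely swaps $\tau^-$ and $\tau^+$: this is the one non-formal input, and it is a finite case check (or can be deduced from the compatibility of $D$ with the Langlands quotient construction and the contragredient); everything else is either the elementary convex-cone remark or the gallery bookkeeping already in place for Theorem \ref{mthm}.
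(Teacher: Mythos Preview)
Your approach is essentially the paper's own: derive $D(\pi_\Gamma)=\pi_{-\Gamma}$ from the co-rank one identity $D(\mathrm{Ker}\,A(w,ww_\alpha))=\mathrm{Im}\,A(w,ww_\alpha)$ (which is exactly what the paper records just before the corollary), and then conclude from $\Gamma\neq-\Gamma$. You supply considerably more detail than the paper does---the convexity argument for $\Gamma\neq-\Gamma$, the justification of the rank-one swap, and the gallery bookkeeping promoting it to the full $D(\pi_\Gamma)=\pi_{-\Gamma}$---and you correctly isolate the edge case $S=\emptyset$, in which the single constituent $Ind_P^G(\rho)$ \emph{is} Aubert self-dual, so the corollary must be read under the implicit hypothesis of reducibility.
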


\bibliographystyle{amsalpha}
\bibliography{ref}

			
\end{document}